\newtheorem{teo}{Theorem}[section]
\newtheorem{lem}[teo]{Lemma}
\newtheorem{prop}[teo]{Proposition}
\newtheorem{coro}[teo]{Corollary}
\newtheorem{defi}[teo]{Definition}
\begin{document}
\thispagestyle{empty}

\title{\textbf{\Large{A note on the density of periodic orbits of Anosov geodesic flow in manifolds of finite volume}}}% \textbf{\Large {Anosov}} \textbf{\Large {Geodesic Flow}}}
\author{Nestor Nina Zarate and Sergio Roma\~na} 
\date{}
\maketitle

\textbf{Presentation:} This article was organized by Sergio Roma\~na as a tribute to the author Nestor Nina Zarate, a young Bolivian mathematician, with training in Bolivia and Brazil, who left us too soon, but who left us with his valuable contribution to mathematical sciences. This article is dedicated to his memory and his loved ones, Bonifacia Zarate, Ernesto Nina, Rosemary Nina Zarate, Rocio Nina Zarate, friends from Bolivia and Brazil, and especially friends from UFRJ. Thank you Nestor, we will all always remember you.

\begin{abstract}
In this paper, we prove that manifolds of finite volume with Anosov geodesic flow have dense periodic orbits. The same result works for conservative Anosov flows in non-compact cases.

\begin{quote}
\textbf{Keywords: Geodesic Flow, Shadowing Lemma, Stable(Unstable) Manifolds.}
\end{quote}

\end{abstract}

\section{Introduction}

Let $N$ be a complete Riemannian manifold, a flow $\varphi^t\colon N \to N$ is said to be  \emph{Anosov} if the tangent bundle of $N$, $TN$, has a splitting
$TN = E^s \oplus \langle \Phi \rangle \oplus E^u $ such that 
\begin{eqnarray*}
	d\varphi^t_{\theta} (E^s(\theta)) &=& E^s(\varphi^t(\theta)),\\
	%d\phi_t(x) (X_c^{*}(x)) &=& X_c^{*}(\phi_t(x)),\\
	d\varphi^t_{\theta} (E^u(\theta)) &=& E^u(\varphi^t(\theta)),\\
	||d\varphi^t_{\theta}\big{|}_{E^s}|| &\leq& C \lambda^{t},\\
	||d\varphi^{-t}_{\theta}\big{|}_{E^u}|| &\leq& C \lambda^{t},
	\end{eqnarray*}
for all $t\geq 0$ with $ C > 0$ and $0 < \lambda <1$,  where $\Phi$ is the vector field derivative of the flow.\\

%\subsubsection{}
If $M$ is a complete Riemannanian manifold, we denote $SM$ as its unit tangent bundle. Given $\theta=(p,v) \in SM$, we define $\gamma_{_{\theta}}(t)$ as the unique geodesic with initial conditions $\gamma_{_{\theta}}(0)=p$ and 
$\gamma_{_{\theta}}'(0)=v$. For $t\in \mathbb{R}$, let $\phi^t:SM \to SM$ be the diffeomorphism given by 
$\phi^{{t}}(\theta)=(\gamma_{_{\theta}}(t),\gamma_{_{\theta}}'(t))$. Recall that this family is a flow (called the \textit{geodesic flow}) in the sense that  $\phi^{t+s}=\phi^{t}\circ \phi^{s}$ for all $t,s\in \mathbb{R}$. \\
 
Geodesic flows provide a wide range of examples of Anosov flows, for example,
the geodesic flow of manifolds with negative pinched curvature (see \cite{Anosov} and  \cite{Kn}. Eberlein in \cite{Ebe} found in manifolds without conjugate points, necessary and sufficient geometrical conditions such that the geodesic flow is Anosov. Some geometrical conditions to obtain an Anosov geodesic flow can be also found in \cite{IR1} and \cite{Alex-Sergio}. 

%Let $M$ be a complete Riemannian manifold and $SM$ the unitary tangent bundle, endowed with the Sakaki metric \textcolor{red}{(see Section 2.1)}. Let $\phi^t\colon SM \to SM$ be  the geodesic flow, and suppose that $\phi^t$ is Anosov, this means that the tangent bundle of $SM$, $T(SM)$, has a splitting
%$T(SM) = E^s \oplus \langle G \rangle \oplus E^u $ such that 
%\begin{eqnarray*}
%	d\phi^t_{\theta} (E^s(\theta)) &=& E^s(\phi^t(\theta)),\\
	%d\phi_t(x) (X_c^{*}(x)) &=& X_c^{*}(\phi_t(x)),\\
%	d\phi^t_{\theta} (E^u(\theta)) &=& E^u(\phi^t(\theta)),\\
%	||d\phi^t_{\theta}\big{|}_{E^s}|| &\leq& C \lambda^{t},\\
%	||d\phi^{-t}_{\theta}\big{|}_{E^u}|| &\leq& C \lambda^{t},\\
	%	a\exp(-tb)\cdot ||\xi|| &\leq& ||d\phi_t(x)(\xi)||  \leq a\exp(tb) \cdot ||\xi||, \forall \xi \in X_c^{*},
%\end{eqnarray*}
%for all $t\geq 0$ with $ C > 0$ and $0 < \lambda <1$,  where $G$ is the vector field derivative of the geodesic flow.\\

%\textcolor{blue}{COMPLETAR}
\ \\
It is not difficult to prove that Anosov geodesic flows on compact manifolds have a density of periodic orbits. The knowledge of the distribution and density of periodic orbits of Anosov flows and more generally hyperbolic flows is a very interesting problem because it gives us more information about the dynamics of the system.  A proof for the density of the periodic orbits of a hyperbolic flow over one compact manifold is using the Spectral Decomposition Theorem for flows (see \cite{AK} and \cite{FH}). Moreover, the information about the density of periodic orbits of a hyperbolic flow over a compact manifold also guarantees the topological transitivity of the flow, among other good properties. % We can summarize all remarks above in the case of the hyperbolic flow on a compact manifold in the following result: (see \cite{AK} and \cite{FH})

However, for Anosov geodesic flow on a manifold of finite volume, this result is not easy to see, since some techniques used for the compact case maybe are not valid in the non-compact case, for example, the uniformly size of local stable and unstable manifolds. Everyone believes in this result, even already  used this result, but we do not find concrete and clear proof. Thus, the main result of this work is to give a formal proof of this fact.

\begin{teo}\label{teo2.7}
If $M$ has finite volume and $\phi^t:SM\to SM$ is an Anosov geodesic flow, then the periodic orbits of $\phi^t$ are dense in $SM$.%$\overline{\text{Per}(\varphi)}=SM$.
\end{teo}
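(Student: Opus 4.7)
The plan is to combine Poincar\'e recurrence with a localized version of the Anosov closing lemma. Because $M$ has finite volume, the Liouville measure on $SM$ is a finite $\phi^t$-invariant Borel measure of full support, so Poincar\'e's recurrence theorem yields a dense set of recurrent points in $SM$. Fix $\theta \in SM$ and $\varepsilon>0$: the task is to produce a periodic point of $\phi^t$ in $B(\theta,\varepsilon)$. By density of recurrent points, pick a recurrent $\theta_0 \in B(\theta,\varepsilon/2)$ and a sequence $t_n \to \infty$ with $\phi^{t_n}(\theta_0) \to \theta_0$.

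Next I would set up, on a compact neighborhood of $\theta_0$, the data required to run a closing argument. The Anosov definition in the excerpt is uniform across $SM$ (the constants $C$ and $\lambda$ are global), so the contraction/expansion rates along $E^{s}$ and $E^{u}$ are automatic; what is \emph{not} a priori uniform is the size of the local stable and unstable manifolds $W^{s,u}_{\delta}$ and of the local product neighborhoods. Applying the stable manifold (Hadamard--Perron) theorem pointwise gives $C^1$ plaques of positive size $\delta(\eta)$ at each $\eta \in SM$, varying continuously with the base point. Hence on any compact set $K \subset SM$ there exist uniform constants $\delta_K>0$ and $\rho_K>0$ such that any two points of $K$ within distance $\rho_K$ admit a well-defined local product bracket $[\cdot,\cdot] \in W^s_{\delta_K} \cap W^u_{\delta_K}$.

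Taking $K := \overline{B(\theta,\varepsilon)}$, for $n$ large enough both $\theta_0$ and $\phi^{t_n}(\theta_0)$ lie in $K$ and are closer than $\rho_K$. I then form the bracket $p_n = [\theta_0, \phi^{t_n}(\theta_0)]$, consider the Poincar\'e return map on a small transversal to $\Phi$ through $\theta_0$, and apply the usual graph-transform / contraction-mapping argument on the local stable and unstable fibers (which uses only the global constants $C,\lambda$) to obtain a fixed point $p$ of this return map. The fixed point yields a periodic orbit of $\phi^t$ with period near $t_n$, and the standard shadowing estimates place $p$ within $\varepsilon$ of $\theta_0$, and hence within $\varepsilon$ of $\theta$. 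For the second part of the theorem (conservative Anosov flows on non-compact manifolds), the same scheme applies verbatim: the invariant volume supplies Poincar\'e recurrence, and the argument above depends only on the uniform Anosov data plus compactness of a neighborhood of $\theta$.

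The main obstacle is precisely the issue flagged in the introduction: the sizes of local invariant manifolds are not uniform over all of $SM$, so the compact-manifold closing/shadowing lemma cannot be quoted as a black box. The delicate work is the second paragraph above, namely a careful verification that the classical Hadamard--Perron construction yields plaques whose size is a continuous function of the base point, so that compactness of $K$ upgrades pointwise existence to the uniform $\delta_K,\rho_K$ needed for a local product structure. Once this is in hand, the contraction-mapping step runs as in the compact Anosov case, since the hyperbolicity rates $C$ and $\lambda$ are global by hypothesis.
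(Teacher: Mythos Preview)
Your high-level strategy---Poincar\'e recurrence plus a closing argument at a recurrent point---is exactly the paper's. The gap is in the closing step, and you have mis-located the main obstacle. Obtaining uniform plaque sizes and a uniform bracket on the compact set $K=\overline{B(\theta,\varepsilon)}$ is not the hard part; the paper records this as an immediate consequence of the stable-manifold theorem (its Theorem~2.1(2)). The genuine difficulty is that the long orbit segment $\phi^{[0,t_n]}(\theta_0)$ typically leaves $K$ and passes through regions of $SM$ where you have no geometric control whatsoever. The ``usual graph-transform / contraction-mapping'' proof of the closing lemma in the compact case does \emph{not} use only $C$ and $\lambda$: it also uses uniform $C^1$ bounds (adapted charts, bounded nonlinearity) along the whole orbit. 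Concretely, your Poincar\'e return map on a transversal through $\theta_0$ is not even well-defined on a full neighborhood of $\theta_0$, since points displaced in the unstable direction are blown up by a factor $\sim\lambda^{-t_n}$ and land far from the local product chart; so there is no map on which to run a contraction.

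What the paper actually does (Proposition~3.2 together with the uniqueness Claim in the proof of the theorem) is replace the black-box closing lemma by an explicit iterative construction that uses \emph{only} the local product structure at the single point $x_0$ and the global constants $C,\lambda$: one repeatedly takes brackets in $B(x_0,\delta)$, flows for the \emph{fixed} first-return time $t_0$, and controls the accumulated error by a geometric-series estimate (Lemma~3.2). The limit point $y$ has an orbit that $\epsilon$-shadows $\phi^{[0,t_0]}(x_0)$ periodically in both time directions; an expansivity-type uniqueness argument then forces $\phi^{t_0+s}(y)=y$. Your sketch would become correct if you replaced ``the usual contraction-mapping argument'' by precisely this construction---but that is the substance of the paper, not a routine invocation.
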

With the same arguments, we can announce 
\begin{teo} If $\varphi^t:N\to N$ is an Anosov flow, which preserves a finite smooth measure, then the periodic orbits of $\varphi^t$ are dense in $N$.
\end{teo}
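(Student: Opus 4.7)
The strategy mirrors Theorem \ref{teo2.7}: combine Poincar\'e recurrence with an Anosov closing argument carried out pointwise, so that one never needs a uniform lower bound on the sizes of local stable and unstable manifolds.

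First I would observe that a finite smooth invariant measure $\mu$ on the (connected) manifold $N$ has $\mathrm{supp}(\mu)=N$: in every chart $\mu$ has a strictly positive density with respect to Lebesgue, so each nonempty open set has positive $\mu$-mass. Since $\mu(N)<\infty$ and $\mu$ is $\varphi^{t}$-invariant, Poincar\'e's recurrence theorem applied to a countable base of open sets shows that the set $R$ of recurrent points satisfies $\mu(R)=\mu(N)$, so $R$ is dense in $N$. Then, for any nonempty open $U\subset N$, pick $\theta_{0}\in R\cap U$. The uniform hyperbolic splitting and the Hadamard--Perron theorem provide local invariant manifolds $W^{s}_{\eta}(\theta_{0})$ and $W^{u}_{\eta}(\theta_{0})$ of some size $\eta=\eta(\theta_{0})>0$, together with a local product box $B\ni\theta_{0}$ contained in $U$ and a small transversal section $\Sigma$ through $\theta_{0}$. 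Recurrence of $\theta_{0}$ yields times $T\to\infty$ with $\varphi^{T}(\theta_{0})$ arbitrarily close to $\theta_{0}$; intersecting $W^{s}_{\eta}(\varphi^{T}(\theta_{0}))$ with $W^{u}_{\eta}(\theta_{0})$ inside $B$ and running the standard contraction argument for the induced Poincar\'e return map on $\Sigma$ produces a fixed point, i.e., a genuine periodic orbit of $\varphi^{t}$ lying inside $U$.

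The main difficulty is precisely the one stressed in the introduction: without compactness one cannot bound $\eta$ from below uniformly in $\theta$, so the standard \emph{global} closing lemma is not directly available. The key observation is that no uniformity is needed once $\theta_{0}$ is fixed: its own (possibly small) value $\eta(\theta_{0})>0$, together with the uniform Anosov constants $C$ and $\lambda$ appearing in the definition, is already enough to close any sufficiently near return, and recurrence guarantees such returns exist. As a byproduct, since finite volume of $M$ turns the Liouville measure on $SM$ into a finite smooth invariant measure for the geodesic flow, this argument already subsumes Theorem \ref{teo2.7}, which justifies the authors' remark that ``the same arguments'' cover both statements.
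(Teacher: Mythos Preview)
Your overall strategy and your key observation --- work at a fixed recurrent point $\theta_{0}$, where one has a positive $\eta(\theta_{0})$ and the \emph{uniform} Anosov constants $C,\lambda$, and close the near-return there --- are exactly the paper's.  The gap is that the sentence ``running the standard contraction argument for the induced Poincar\'e return map on $\Sigma$ produces a fixed point'' is not a citation but is precisely the technical content the paper supplies.  A single intersection $W^{s}_{\eta}(\varphi^{T}(\theta_{0}))\cap W^{u}_{\eta}(\theta_{0})$ only gives a heteroclinic point, not a periodic one; to get a genuine fixed point of the return map you must iterate, and in the non-compact setting the orbit segment $\varphi^{[0,T]}(\theta_{0})$ may pass through regions with arbitrarily small injectivity radius, so a naive $C^{1}$-perturbation/implicit-function argument for the first-return map (which needs a $T$-independent neighbourhood on which $DP$ is controlled) is not available off the shelf.

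The paper's substitute is to carry out the iteration \emph{entirely inside a fixed ball} $B(x_{0},\delta)$: it repeatedly takes Bowen brackets there to build sequences $y_{n},\theta_{n},z_{n}$, shows via the geometric-series bound of Lemma~\ref{L1N} that all iterates stay in $B(x_{0},\delta/3)$ so the same local product chart can be reused, passes to a limit $y$ whose full orbit $\epsilon$-shadows the segment $\phi^{[0,t_{0}]}(x_{0})$ piecewise (Proposition~\ref{pr2}), and finally proves a uniqueness claim for this shadowing orbit that forces $\phi^{t_{0}+s}(y)=y$.  Your paragraph beginning ``The key observation is that no uniformity is needed once $\theta_{0}$ is fixed'' is exactly right as a summary, but it is a summary of the work rather than a replacement for it; what is missing from your proposal is the mechanism (iterated brackets staying in one chart, plus uniqueness of the shadow) that turns that observation into an actual periodic point.
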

Those results imply the following corollary (valid also for conservative Anosov flow).% \textcolor{blue}{Similar to to spectral decomposition theorem COMPLETAR}
\begin{coro}\label{C1-Main-T}
If $M$ has finite volume such that $\phi^t:SM\to SM$ is an Anosov geodesic flow, then 
\begin{itemize}
\item[\emph{(a)}] For all $x\in SM$, $W^{cu}(x)$ and $W^{cs}(x)$ are dense in $SM$.
\item[\emph{(b)}] $\phi^t$ is a transitive flow.
\end{itemize}
\end{coro}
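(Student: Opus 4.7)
The corollary is derived directly from Theorem \ref{teo2.7}. My plan is to establish (b) first, and then obtain (a) from the classical theory of transitive Anosov flows.

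For part (b), Theorem \ref{teo2.7} immediately yields $\Omega(\phi^t) = SM$, since the closure of the periodic orbits lies in the non-wandering set. To upgrade this to transitivity I would run a homoclinic relation argument. Call two periodic orbits $\mathcal{O}_p$ and $\mathcal{O}_q$ \emph{homoclinically related} if $W^u(\mathcal{O}_p) \cap W^s(\mathcal{O}_q) \neq \emptyset$ and $W^u(\mathcal{O}_q) \cap W^s(\mathcal{O}_p) \neq \emptyset$, and let $H(\mathcal{O}_p)$ denote the closure of the union of the orbits homoclinically related to $\mathcal{O}_p$. Using density of periodic orbits together with the local product structure and the inclination lemma, one checks that $H(\mathcal{O}_p)$ is simultaneously open and closed in $SM$; connectedness of $SM$ then forces $H(\mathcal{O}_p) = SM$. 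A standard shadowing argument applied to a pseudo-orbit that cycles through a dense sequence of such periodic orbits produces a point with dense orbit, giving transitivity.

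For part (a), once (b) is available, the conclusion follows from the classical fact that for a transitive Anosov flow the weak stable and weak unstable foliations are minimal, i.e.\ every leaf $W^{cs}(x)$ and $W^{cu}(x)$ is dense in $SM$. Concretely, given $z \in SM$ with dense forward orbit and $x \in SM$, I would take $t_n \to \infty$ with $\phi^{t_n}(z) \to x$; continuity of the strong unstable foliation yields $W^u_{\mathrm{loc}}(\phi^{t_n}(z)) \to W^u_{\mathrm{loc}}(x)$, so any point in a prescribed open $V \subset SM$ can be approximated by points of $W^{cu}(x)$ via the bracket with the forward orbit of $z$. The density of $W^{cs}(x)$ is obtained analogously, using the backward orbit of $z$.

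The principal obstacle is the non-compactness of $SM$: neither the local product structure nor the sizes of the local stable and unstable manifolds are uniformly bounded below on $SM$. This is precisely the technical difficulty addressed in the proof of Theorem \ref{teo2.7}, where finite volume combined with Poincar\'e recurrence supplies uniform control near recurrent, and hence near periodic, points. My plan is to reuse those uniform estimates to justify the openness step of the homoclinic relation argument and the bracket operations used in establishing (a).
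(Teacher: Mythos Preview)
Your route is genuinely different from the paper's, and the logical order is reversed. The paper proves (a) first, directly from Theorem \ref{teo2.7}, and then deduces (b) from (a). For (a) the paper shows that $\overline{W^{cu}(x)}$ is open (hence all of $SM$ by connectedness): given $z\in\overline{W^{cu}(x)}$ and a small neighborhood $U$ of $z$, density of periodic orbits puts a periodic $p\in U$; the local product structure gives $y\in W^{ss}(p)\cap W^{cu}(z)\subset\overline{W^{cu}(x)}$, and since $p$ is periodic the forward orbit of $y$ accumulates on $\mathcal{O}(p)$, forcing $p\in\overline{W^{cu}(x)}$; density of periodic points in $U$ then yields $U\subset\overline{W^{cu}(x)}$. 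Transitivity (b) is then obtained by a short $\alpha$-limit argument that uses only the density of $W^{cs}$-leaves. This avoids homoclinic classes and shadowing of infinite pseudo-orbits entirely, and in particular never needs the inclination lemma or any global shadowing estimate in the non-compact setting.

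Your plan for (b) via homoclinic classes is reasonable and can be made to work, but your sketch for (a) has a gap. Taking $z$ with dense forward orbit and $t_n\to\infty$ with $\phi^{t_n}(z)\to x$, the convergence $W^{u}_{\mathrm{loc}}(\phi^{t_n}(z))\to W^{u}_{\mathrm{loc}}(x)$ shows that the single leaf $W^{cu}(z)=W^{cu}(\phi^{t_n}(z))$ accumulates on $W^{cu}_{\mathrm{loc}}(x)$; this is the wrong direction. It yields $W^{cu}_{\mathrm{loc}}(x)\subset\overline{W^{cu}(z)}$, which is already obvious since the dense orbit of $z$ sits inside $W^{cu}(z)$, and says nothing about $\overline{W^{cu}(x)}$ meeting an arbitrary open $V$. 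The ``bracket with the forward orbit of $z$'' you invoke produces points in $W^{cu}(z)$ or in $W^{ss}$-leaves, not in $W^{cu}(x)$. A correct passage from transitivity to minimality of the weak foliations needs an additional ingredient (for instance, showing that $\overline{W^{cu}(x)}$ is $W^{ss}$-saturated, or---as the paper does---feeding periodic points back in), and the classical compact-case proofs of this implication typically go through periodic orbits anyway, so your detour through (b) does not really save work.
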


\section[Preliminaries]{Preliminaries}
\subsection{Stable and Unstable Manifolds}
%For $\eta>0$, we will consider the following sets:
Assume that $\varphi^t:N\to N$ is an Anosov flow. Given $x\in N$, we define the stable, unstable, center stable and center unstable manifolds, respectively, as 
\begin{eqnarray*}
W^{ss}(x)&=&\{z\in N: \lim_{t\to +\infty}d(\varphi^{t}(z),\varphi^{t}(x))=0\},\\
W^{uu}(x)&=&\{z\in N: \lim_{t\to +\infty}d(\varphi^{-t}(z),\varphi^{-t}(x))=0\},\\
W^{cs}(x)&=&\bigcup_{t\in \mathbb{R}}W^{ss}(\varphi^{t}(x)),\\
W^{cu}(x)&=&\bigcup_{t\in \mathbb{R}}W^{uu}(\varphi^{t}(x)).
\end{eqnarray*}
Given  $\eta>0$, we also consider the local stable and unstable manifolds defined by 
\begin{eqnarray*}
W_{\eta}^{ss}(x)&=&\{z\in W^{ss}(x): d(\varphi^{t}(x),\varphi^{t}(z))\leq \eta, \,\,\, t\geq 0\},\\
W_{\eta}^{uu}(x)&=&\{z\in N: d(\varphi^{-t}(x),\varphi^{-t}(z))\leq \eta, \, t\geq 0\},\\
W_{\eta}^{cu}(x)&=&\bigcup_{|t|<\eta}W^{uu}_{\eta}(\phi^t(x)).\\
W_{\eta}^{cs}(x)&=& \bigcup_{|t|<\eta}W^{ss}_{\eta}(\phi^t(x)).
\end{eqnarray*}

%\end{lem}%%%%

\noindent When $\varphi^t\colon N \to N$ is a $C^r$-Anosov flow and $N$ is a compact manifold, the stable and unstable manifolds theorem states that there exists $\epsilon > 0$ such that the local stable and unstable manifolds are $C^r$-disk embedded tangent to the stable and unstable bundle, respectively. However, the construction of the differentiability of the local stable-unstable manifolds depends on the existence of some special charts depending on the injectivity radius of manifold $M$ (see \cite{AK} for more details).

In the non-compact case, we can imitate the proof of the stable and unstable manifold theorem to construct the differentiability of the local stable and unstable manifolds on each point $x\in SM$. Thant can be guarantee since in the Sasaki's metric on $SM$ the injectivity radius is positive (cf. \cite[page 41]{Eldering}). Thus we have a crucial difference with the compact case because in the non-compact case, we have that the sizes of the local stable-unstable manifolds depend on $x\in SM$. In other words, we have 

\begin{teo}\emph{[{Stable and Unstable Manifolds in the  Non-Compact Case}]}\, \\ \, 
Let $\phi^{t}:SM\to SM$ be a $C^{r}$- Anosov geodesic flow,  $r\geq 1$. Then, for each $x\in SM$ there is a pair of embedded $C^{r}$-disks $W_{\epsilon(x)}^{ss}$, $W_{\epsilon(x)}^{uu}\subset N$ called the \textbf{local strong-stable} and \textbf{local strong-unstable} manifolds of $x\in N$ of size $\epsilon(x)>0$, respectively such that:
\begin{enumerate}
\item[\emph{(1)}] $T_{x}W_{\epsilon(x)}^{ss}(x)=E_{x}^{s}$, $T_{x}W_{\epsilon(x)}^{uu}(x)=E_{x}^{u}$.
\item[\emph{(2)}] For every compact subset $K\subset M$ there exists $\epsilon_{K}:=\inf\{\epsilon(x)>0: x\in K\}>0$ such that for every $x\in K$ we have that $W_{\epsilon_{_{K}}}^{ss}(x)$ and $W_{\epsilon_{_{K}}}^{uu}(x)$ are embedded.
\item[\emph{(3)}] $W_{\epsilon(x)}^{ss}(x)\cap W_{\epsilon(x)}^{uu}(x)=\{x\}$.
\end{enumerate}
\end{teo}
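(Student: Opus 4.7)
My plan is to adapt the standard Hadamard--Perron construction of local invariant manifolds (as in \cite{AK}) to the non-compact setting, substituting the uniform injectivity radius used in the compact case by the Sasaki injectivity radius, which is positive at each $x\in SM$ by \cite[page 41]{Eldering}. The key observation is that, while the scale of admissible local charts is no longer uniform, the hyperbolicity constants $C>0$ and $\lambda\in(0,1)$ are global, so the graph transform converges as soon as the chart is chosen small enough at the given base point.

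First I would fix $x\in SM$, let $r(x)>0$ be the Sasaki injectivity radius at $x$, and work in the exponential chart $\exp_x\colon B(0,r(x))\subset T_xSM\to SM$, identifying a neighborhood of $x$ with a ball in $T_xSM=E^s_x\oplus\langle\Phi(x)\rangle\oplus E^u_x$. In these coordinates $\phi^{1}$ is conjugate, on some ball $B(0,\delta(x))$, to a map of the form $D\phi^{1}_x+R_x$ with $R_x$ of class $C^{r-1}$ and $R_x(0)=DR_x(0)=0$. I would then apply the graph transform to the space of Lipschitz maps $E^s_x\cap B(0,\delta(x))\to\langle\Phi(x)\rangle\oplus E^u_x$: its unique fixed point is the local strong-stable disk $W^{ss}_{\epsilon(x)}(x)$, tangent to $E^s_x$ at $x$, and $C^r$-smoothness is obtained by the usual bootstrap. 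Reversing time yields $W^{uu}_{\epsilon(x)}(x)$, which settles item (1).

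For item (2), I would invoke lower semicontinuity of the Sasaki injectivity radius on $SM$ together with the continuous dependence of $R_x$ and its derivatives on the base point, and conclude that on any compact $K\subset SM$ (equivalently, reading the statement as $K=\pi^{-1}(K_0)$ for $K_0\subset M$ compact, so that $K$ is compact in $SM$) the function $x\mapsto\epsilon(x)$ is bounded below by a positive $\epsilon_K$. For item (3), the splitting $TSM=E^s\oplus\langle\Phi\rangle\oplus E^u$ gives $E^s_x\cap E^u_x=\{0\}$, so the two disks are transverse at $x$ and, by smoothness, after shrinking $\epsilon(x)$ if necessary they meet only at $x$.

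The main obstacle I foresee is rigorously tracking the dependence of $\epsilon(x)$ on $x$: unlike the compact case, where the chart radius, the size of the nonlinear remainder, and the contraction rate of the graph transform can all be fixed once and for all, here each of them varies with $x$. The point that makes everything go through is that the Anosov constants $(C,\lambda)$ are global and the Sasaki injectivity radius is \emph{locally uniformly} bounded below, so on every compact set the three pieces of data can be controlled simultaneously; this is exactly what yields a consistent pointwise size $\epsilon(x)$ satisfying the compact-set uniformity of~(2).
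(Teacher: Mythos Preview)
Your proposal is correct and follows precisely the approach the paper indicates: the paper does not give a detailed proof of this theorem but only remarks, in the paragraph preceding the statement, that one imitates the classical stable/unstable manifold construction using that the Sasaki injectivity radius is positive at each point (\cite[page~41]{Eldering}), with the sole difference from the compact case being that the size $\epsilon(x)$ is point-dependent. Your Hadamard--Perron/graph-transform outline in the exponential chart of radius $r(x)$, together with the compactness argument for~(2) and the transversality argument for~(3), is exactly the fleshing-out of that sketch.
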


\subsection{Local Product Structure}
If $\phi^t:N \to N$ is an Anosov flow and $N$ is compact, then there are $\epsilon>0$ and $\delta>0$ such that $d(x,y)<\delta$
$$W_{\epsilon}^{cu}(x)\cap W_{\epsilon}^{ss}(y):=\{[x,y]\},$$
which is called the \emph{Bowen-Bracket} or also \emph{Local Product Structure}.\\
The Bowen-Bracket can be defined in the non-compact case, but the size of the local manifolds involved is not uniform. More specifically,  %the local intersection of the local central unstable foliation  and local strong stable foliation is always nonempty for close points.  

%The Local Product Structure  the non compact case. The following result appears in the compact case and characterizes the local maximility through local stable and stable manifolds. (See \cite{FH})
\begin{teo}[Local Product Structure ]\label{teo1.7}
Let $\phi^{t}:N\to N$ be an Anosov flow and $\epsilon>0$ small enough. Then for each $x\in N$ there is a $\delta>0$ and $\eta>0$ such that 
$$W_{\eta}^{cu}(y)\cap W_{\eta}^{ss}(z):=\{[y,z]\}\in B(x,\epsilon),$$
for $y,z \in \overline{B(x,\delta)}$. Analogous result holds for $W_{\eta}^{cs}(y)$ and $W_{\eta}^{uu}(z)$.
\end{teo}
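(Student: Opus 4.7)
The plan is to localize to a small compact neighborhood of $x$, where the non-uniform data of non-compact Anosov theory becomes uniform, and then run the standard compact-case argument inside that neighborhood. First I would fix $x \in N$ and pick a radius $r_0 > 0$ so that $K := \overline{B(x, r_0)}$ is compact. Applying item~(2) of the preceding stable/unstable manifold theorem to this $K$ yields a uniform size $\epsilon_K > 0$ such that $W^{ss}_{\epsilon_K}(y)$ and $W^{uu}_{\epsilon_K}(y)$ are embedded $C^r$-disks tangent respectively to $E^s(y)$ and $E^u(y)$ for every $y \in K$. Since $K$ is compact and the bundles $E^s$, $\langle\Phi\rangle$, $E^u$ are continuous, the pairwise angles between them are uniformly bounded below by some $\alpha_0 > 0$ on $K$.

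Next I would fix $\epsilon > 0$ small (in particular $\epsilon < \min(r_0, \epsilon_K)$) and choose $\eta \in (0, \epsilon/2)$. The goal becomes to show that for $y, z$ close enough to $x$, the submanifolds $W^{cu}_\eta(y)$ and $W^{ss}_\eta(z)$ meet transversally in a single point inside $B(x,\epsilon)$. For this I would work in a chart at $x$ adapted to the splitting $T_x N = E^s(x) \oplus \langle \Phi(x) \rangle \oplus E^u(x)$. Standard Hadamard--Perron type estimates, carried out uniformly over $K$, then express the tangent plane to $W^{ss}_\eta(y)$ (resp.\ to $W^{cu}_\eta(y)$) at any of its points as a graph over $E^s(x)$ (resp.\ over $\langle\Phi(x)\rangle \oplus E^u(x)$) with slope controlled by $\alpha_0$, provided $\eta$ is sufficiently small; in particular transversality is preserved throughout.

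The decisive step is an implicit-function/transversality argument: since $\dim W^{cu}_\eta(y) + \dim W^{ss}_\eta(z) = \dim N$ and the tangent spaces remain uniformly transverse in the chart, the set $W^{cu}_\eta(y) \cap W^{ss}_\eta(z)$ is, for fixed $\eta$, locally a finite set that depends continuously on the pair $(y,z)$. At $y = z = x$ this intersection equals $\{x\}$ by item~(3) of the non-compact stable/unstable manifold theorem together with the local uniqueness given by transversality. By continuity one can therefore pick $\delta > 0$ small enough that for every $y, z \in \overline{B(x,\delta)}$ the intersection contains exactly one point $[y,z]$, and this point lies in $B(x,\epsilon)$. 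The symmetric statement for $W^{cs}_\eta(y) \cap W^{uu}_\eta(z)$ follows from an identical argument with the roles of $E^s$ and $E^u$ exchanged.

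The main obstacle I expect is establishing the continuous $C^1$ dependence of the local strong-stable and strong-unstable disks (and thereby of the thickened center-stable and center-unstable pieces) on their base point in the non-compact setting. Once this dependence is in hand, via applying property~(2) to $K$ and running the graph-transform construction uniformly on $K$, the uniform lower bound $\alpha_0$ on the transversality angles simultaneously gives existence and uniqueness of $[y,z]$, while continuity at $(y,z) = (x,x)$ places it in $B(x,\epsilon)$.
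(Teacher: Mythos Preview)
Your proposal is correct and matches the paper's approach: the paper gives no detailed proof at all, merely stating that ``the proof of the last theorem is similar to the compact case, see \cite{FH} for more details.'' Your sketch---localizing to a compact neighborhood $K$ of $x$, invoking item~(2) of the non-compact stable/unstable manifold theorem to obtain uniform sizes $\epsilon_K$, and then running the standard transversality/implicit-function argument---is precisely how one reduces to the compact case, so you are in effect supplying the details the paper omits.
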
    
The proof of the last theorem is similar to the compact case, see \cite{FH} for more details.

\section[Shadowing Lemma]{Shadowing Lemma}  

The orbit structure of a hyperbolic dynamical system has a distinctive and iconic richness and complexity, and these features can be derived from what thereby appears as a core feature of hyperbolic dynamics: the shadowing of orbits. In this section, we remember the Shadowing Lemma for flows and the expansivity in the compact case (See \cite{FH} and \cite{AK}).
%%%%%%%%%%%%%%%%%%%%%%%%%%%%%%%%%%%%%%%%%%%%%%%%%%%%%%%%%%%
We also will give some definitions of the shadowing and expansiveness that we need to prove the density of periodic orbits when we have a geodesic flow on a finite volume manifold. \\
\ \\
Let $x\in N$ and $t_{0}>0$. We consider the segment of orbit $\varphi^{[0,t_{0}]}(x):=\{\phi^{t}(x):t\in[0,t_{0}]\}$ of $x$ of size $t_0$, the future orbit $O^{+}(x):=\{\phi^{t}(x):t\geq 0\}$ of $x$, and the past orbit $O^{-}(x):=\{\phi^{t}(x):t\leq 0\}$.% of $x$ as $O^{+}(y)=\{\varphi^{t}(y):t\geq 0\}$ and , respectively.

%\textcolor{red}{Maybe we need to change the definition}
\begin{defi}[Shadowing forward and backward]
Let $x_{0}, y\in SM$, $t_{0}>0$ and $\epsilon>0$. %Consider the orbit segment $\varphi^{[0,t_{0}]}(x_{0})=\{\varphi^{t}(x_{0}):t\in[0,t_{0}]\}$ and the future orbit and past orbit of $y$, $O^{+}(y)=\{\varphi^{t}(y):t\geq 0\}$ and $O^{-}(y)=\{\varphi^{t}(y):t\leq 0\}$. 

\begin{enumerate}
\item[\emph{1.}] We said that $O^{+}(y)$ $\epsilon$-{shades forward by piecewise} the orbit segment $\phi^{[0,t_{0}]}(x_{0})$ if 
\begin{eqnarray*}
d(\phi^{t}(y),\phi^{t}(x_{0}))\leq \epsilon,\hspace{0,3cm}\forall \hspace{0,2cm}t\in[0,t_{0}],
\end{eqnarray*}
and there exists a sequence $\{s_{j}\}_{j\geq 0}$ with $s_{j}\geq 0$ $($called \emph{transition times}$)$, such that for all $k\geq 1$ holds
\begin{eqnarray*}
d(\phi^{t}(\phi^{kt_{0}+\sum_{i=0}^{k-1}s_{i}}(y)),\phi^{t}(x_{0}))\leq\epsilon, \hspace{0,3cm}\forall \hspace{0,2cm}t\in[0,t_{0}].
\end{eqnarray*}  
\item[\emph{2.}] Analogously, we said that $O^{-}(y)$ $\epsilon$-{shades backward by piecewise} the orbit segment $\phi^{[0,t_{0}]}(x_{0})$ if 
\begin{eqnarray*}
d(\phi^{-t}(y),\phi^{-t}(x_{0}))\leq \epsilon,\hspace{0,3cm}\forall \hspace{0,2cm}t\in[0,t_{0}],
\end{eqnarray*}
and there exists a sequence $\{r_{j}\}_{j\geq 0}$ with $r_{j}\geq 0$ $($called \emph{transition times}$)$ such that for all $k\geq 1$ holds
\begin{eqnarray*}
d(\phi^{-t}(\phi^{-kt_{0}-\sum_{j=0}^{k-1}r_{j}}(y)),\phi^{-t}(x_{0}))\leq \epsilon, \hspace{0,3cm}\forall \hspace{0,2cm}t\in[0,t_{0}].
\end{eqnarray*}
\end{enumerate}
We said that the orbit $O(y)=O^{+}(y)\cup O^{-}(y)$ $\epsilon$-{shades by piecewise} the orbit segment $\phi^{[0,t_{0}]}(x_{0})$ if $O^{+}(y)$ $\epsilon$-shades forward by piecewise and $O^{-}(y)$ $\epsilon$-shades backward by piecewise, respectively. % the orbit segment $\varphi^{[0,t_{0}]}(x_{0})$.
\end{defi}
%\textcolor{red}{It's better to get rid of the figure}
%\begin{figure}[!h]
%\centering
%\includegraphics[width=17cm]{Shadowing1.jpg}
%\caption{Shadowing forward by piecewise}
%\label{fig:Shadowing1}
%\end{figure}

The above definition of shadowing forward and backward by piecewise is similar to the specification property for flows (see \cite{FH}). %the diference between specification property and the shadowing by piecewise, is that the specification property its  applying for orbit segment arising from a recurrent point. (see Proposition \ref{pr2} below)    

%\textcolor{red}{Maybe we need to include the negative recurrent points}
\begin{defi}
Let $\phi^t:N\to N$ be a flow, a point $x\in N$ is a \emph{recurrent point} if for all neighborhood $U$ of $x$ there is $t\in \mathbb{R}$ such that $\phi^t(U)\cap U\neq \emptyset.$ 
\end{defi}
Note that if $\phi^t$ is a conservative Anosov flow, then almost every point on $N$ is recurrent. In particular, if $\phi^t$ a geodesic flow of a manifold $M$ of finite volume, then almost every point of $SM$ is recurrent.
We denote the recurrent set of $\phi^t$ as $\text{Rec}(\phi^t)$.\\
\ \\
In the next proposition, we prove that segments of the orbit of a recurrent point of $\phi^t$ can be $\epsilon$-shadowed by piecewise. For this sake, we need the following general lemma.\\

Given $t_0>0$ and a sequence $\underline{c}:=\{c_j\}$, we consider the family of real functions $P_{m, \underline{c}}\colon [0,t_0]\to \mathbb{R}$, define by 
\begin{equation}\label{co}
P_{m,\underline{c}}(t):=\lambda^{t}+\lambda^{t_{0}-t}+\sum_{j=1}^{m}\lambda^{t+jt_{0}+\sum_{i=1}^{j}c_{i}},
\end{equation}
where $\lambda\in(0,1)$ comes from the definition of Anosov flow.
\begin{lem}\label{L1N}
Given $\eta>0$, $t_0>2\eta$, then the family of functions $P_{m,\underline{c}}$, with $|c_i|<\eta$ is uniformly bounded.
%\begin{equation}\label{co}
%P_{m,\{c_i\}}(t):=\lambda^{t}+\lambda^{t_{0}-t}+\sum_{j=1}^{m}\lambda^{t+jt_{0}+\sum_{i=1}^{j}c_{i}},
%\end{equation}
%where $\lambda\in(0,1)$ is the constant of contraction from the definition of Anosov flow.
\end{lem}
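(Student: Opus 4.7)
The plan is to show directly that each of the three terms (or groups of terms) in $P_{m,\underline{c}}(t)$ admits a bound that depends only on $\lambda$, $\eta$, and $t_0$ and not on $m$, $t$, or the particular sequence $\underline{c}$. The first two contributions $\lambda^t$ and $\lambda^{t_0-t}$ are trivially bounded: since $\lambda\in(0,1)$ and $t,\,t_0-t\geq 0$ for $t\in[0,t_0]$, each term is at most $1$, so together they contribute at most $2$.

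The heart of the argument is the geometric series
$$S_m(t,\underline{c}) \;:=\; \sum_{j=1}^{m}\lambda^{t+jt_{0}+\sum_{i=1}^{j}c_{i}}.$$
First I would use the hypothesis $|c_i|<\eta$ to estimate the exponent from below: since
$$\sum_{i=1}^{j} c_i \;\geq\; -j\eta,$$
and $t\geq 0$, we get
$$t+jt_0+\sum_{i=1}^{j}c_i \;\geq\; j(t_0-\eta).$$
Because $t_0>2\eta$, the quantity $t_0-\eta$ is strictly positive (in fact $>\eta$), so setting $\mu:=\lambda^{\,t_0-\eta}\in(0,1)$ yields $\lambda^{t+jt_0+\sum c_i}\leq \mu^{j}$.

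Summing the geometric series gives
$$S_m(t,\underline{c}) \;\leq\; \sum_{j=1}^{m}\mu^{j} \;\leq\; \frac{\mu}{1-\mu},$$
uniformly in $m$, $t\in[0,t_0]$, and $\underline{c}$ with $|c_i|<\eta$. Combining with the bound on the first two terms yields the uniform estimate
$$P_{m,\underline{c}}(t)\;\leq\; 2+\frac{\mu}{1-\mu},$$
which proves the claim.

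There is no real obstacle: the only point that requires care is checking that the hypothesis $t_0>2\eta$ indeed forces the exponents $t+jt_0+\sum c_i$ to grow linearly in $j$ despite the worst-case accumulation of negative $c_i$'s, so that the tail of the sum decays geometrically. Once this observation is made, the bound is immediate from the geometric-series formula.
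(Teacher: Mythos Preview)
Your proof is correct and follows essentially the same approach as the paper: bound $\lambda^t$ and $\lambda^{t_0-t}$ trivially by $1$, then dominate the remaining sum by a geometric series using $jt_0+\sum_{i=1}^j c_i\geq j(t_0-\eta)>0$. The only cosmetic differences are that the paper factors out $\lambda^t$ before summing and further weakens the ratio to $\lambda^{t_0/2}$ (since $t_0-\eta>t_0/2$), obtaining $K=2+\dfrac{\lambda^{t_0/2}}{1-\lambda^{t_0/2}}$ instead of your sharper $2+\dfrac{\mu}{1-\mu}$ with $\mu=\lambda^{t_0-\eta}$.
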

\begin{proof}
\noindent For all $m\in \mathbb{N}$, consider the sequence 
$$S_{m,\underline{c}}:=\sum_{j=1}^{m}\lambda^{jt_{0}+\sum_{i=1}^{j}c_{i}}.$$
%\begin{eqnarray}\label{sum}
%S_{m}(t)&=&\sum_{j=1}^{m}\lambda^{t+jt_{0}+\sum_{i=1}^{j}c_{i}}\nonumber\\
%&=&\lambda^{t}\sum_{j=1}^{m}\left(\lambda^{t_{0}+\frac{1}{j}\sum_{i=1}^{j}c_{i}}\right)^{j},
%\end{eqnarray}

%\noindent where for all $i\in \mathbb{N}$, $c_{i}\in \mathbb{R}$ and $|c_{i}|<\eta$.\\
\noindent Since $|c_{i}|<\eta$, then for all $j\in \mathbb{N}$ we have
$$t_0-\eta<t_0+\dfrac{1}{j}\sum_{i=1}^{j}c_{i}<t_0+\eta.$$
%\begin{eqnarray*}
%-\eta<&c_{i}&<\eta\\
%j(-\eta)<&\sum_{i=1}^{j}c_{i}&<j\eta\\
%-\eta<&\dfrac{1}{j}\sum_{i=1}^{j}c_{i}&<\eta\\
%t_{0}-\eta<&t_{0}+\dfrac{1}{j}\sum_{i=1}^{j}c_{i}&<t_{0}+\eta
%\end{eqnarray*}

\noindent Moreover, since $t_{0}>2\eta$, then for all $j\in \mathbb{N}$ we have
$$0<\dfrac{t_{0}}{2}< t_0-\eta<  t_{0}+\dfrac{1}{j}\sum_{i=1}^{j}c_{i}.$$
Thus,
$$\left(\lambda^{(t_{0}+\frac{1}{j}\sum_{i=1}^{j}c_{i})}\right)^{j}\leq ({\lambda}^{t_0-\eta})^j\leq (\lambda^{\frac{t_{0}}{2}})^j,$$
%%%%%%%%%%%%%%%%%%%%%%%%%%%%%%%%
%\begin{eqnarray*}
%0<\dfrac{t_{0}}{2}< t_0-\eta<  t_{0}+\dfrac{1}{j}\sum_{i=1}^{j}c_{i}\\
%\lambda^{(t_{0}+\frac{1}{j}\sum_{i=1}^{j}c_{i})}\leq \lambda^{t_0-\eta}\leq \lambda^{\frac{t_{0}}{2}}\\
%\left(\lambda^{(t_{0}+\frac{1}{j}\sum_{i=1}^{j}c_{i})}\right)^{j}\leq (\lambda^{\frac{t_{0}}{2}})^j\\
%\end{eqnarray*}
%%%%%%%%%%%%%%%%%%%%%%%%%%%%%%%%%%%%%%%%%%%%%%%%%%
which implies that, for all $m\in \mathbb{N}$.%summing over $j\in \mathbb{N}$.
\begin{eqnarray*}
S_{m,\underline{c}}=\sum_{j=1}^{m}\left(\lambda^{t_{0}+\frac{1}{j}\sum_{i=1}^{j}c_{i}}\right)^{j}&\leq& \sum_{j=1}^{m}(\lambda^{\frac{t_{0}}{2}})^j\\
&\leq&\sum_{j=1}^{\infty}(\lambda^{\frac{t_{0}}{2}})^j\\
&=&\dfrac{\lambda^{\frac{t_{0}}{2}}}{1-\lambda^{\frac{t_{0}}{2}}}=L.
\end{eqnarray*}
Note that $P_{m,\underline{c}}(t)=\lambda^{t}+\lambda^{t_{0}-t}+\lambda^t S_{m,\underline{c}}$. Since $0<\lambda<1$, then  $0<\lambda^{t}\leq 1$ and $0<\lambda^{t_{0}-t}\leq 1$, for each $t\in [0,t_{0}]$, thus
%for all $t\in [0,t_{0}]$ holds
$$P_{m,\underline{c}}(t)\leq 2+L:=K,$$
and the proof of the lemma is complete.
%\begin{eqnarray*}\label{co1}
%\lambda^{t}+\lambda^{t_{0}-t}+\sum_{j=1}^{m}\lambda^{t+jt_{0}+\sum_{i=1}^{j}c_{i}}\leq 2+L=K, \,\,\,\  m\in\mathbb{N}.
%\end{eqnarray*}
\end{proof}
\begin{prop}\label{pr2}
Let $\phi^t:N\to N$ be an Anosov flow. Fix $x_{0}\in \emph{Rec}(\phi^t)$. For all $\epsilon>0$ there exists $t_{0}>0$ and $y\in SM$ such that $O(y)$ $\epsilon$-shades by piecewise the segment of orbit $\phi^{[0,t_{0}]}(x_{0}).$
\end{prop}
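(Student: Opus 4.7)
The plan is to exploit the recurrence of $x_0$ together with the local product structure of the Anosov flow, using Lemma \ref{L1N} to control the cumulative exponential contraction across infinitely many iterations.

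First I would extract bi-infinite return times. Since $x_0 \in \text{Rec}(\phi^t)$ and we are in a conservative Anosov setting, Poincar\'e recurrence (applied to both $\phi^t$ and $\phi^{-t}$) yields sequences $\tau_n^{+} \to +\infty$ and $\tau_n^{-} \to -\infty$ with $\phi^{\tau_n^{\pm}}(x_0) \to x_0$. Given $\epsilon > 0$, I would fix a small $\eta < \epsilon$ and a $t_0 > 2\eta$ large, then pass to subsequences so that the consecutive gaps $\tau_{n+1}^{+}-\tau_n^{+}$ (and analogously $\tau_n^{-}-\tau_{n+1}^{-}$) all lie in $[t_0, t_0+\eta)$. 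Setting $s_n := \tau_{n+1}^{+}-\tau_n^{+}-t_0 \geq 0$ and $r_n := \tau_n^{-}-\tau_{n+1}^{-}-t_0 \geq 0$, the deviations $c_n = s_n$ satisfy $|c_n| < \eta$, matching the hypothesis of Lemma \ref{L1N}.

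Next I would build $y$ via an iterated Bowen bracket. At each return time, Theorem \ref{teo1.7} at $x_0$ (with $\phi^{\tau_n^{\pm}}(x_0)$ playing the role of the nearby point) lets one successively correct an approximating sequence $y_N$, obtained by intersecting $W^{cs}$-tubes based at the forward returns with $W^{cu}$-tubes based at the backward returns. Each correction has magnitude of order $\eta\,\lambda^{\tau_n}$, so the displacement of $\phi^t(y_N)$ from $\phi^t(x_0)$ on $[0,t_0]$ is bounded by $\eta\cdot P_{m,\underline{c}}(t) \leq \eta K$ by Lemma \ref{L1N}. Choosing $\eta$ so that $\eta K$ plus the continuous-dependence modulus of $\phi^{\cdot}$ on $[0,t_0]$ is $<\epsilon$, the limit $y = \lim_{N\to\infty} y_N$ is well defined. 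The piecewise shadowing condition then follows by combining the bracket estimate with the closeness $\phi^{\tau_k^{\pm}}(x_0) \approx x_0$, applied uniformly in $t \in [0,t_0]$ at each return; the transition times are precisely the $s_n$ and $r_n$ fixed above, and the backward case is symmetric to the forward one.

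The hardest part I anticipate is the simultaneous uniform control of the iterated Bowen bracket across all $k \geq 1$: this is exactly what Lemma \ref{L1N} supplies, since it bounds the entire geometric sum $P_{m,\underline{c}}$ independently of the number $m$ of iterations, which is what makes the limit of the $y_N$ well defined and keeps the piecewise error uniformly below $\epsilon$. A secondary delicate point is ensuring the existence of return times that are simultaneously close enough to $x_0$ in $SM$ and approximately equispaced at $t_0$ within tolerance $\eta$; this follows from the density of Poincar\'e recurrence times for conservative Anosov flows, by a diagonal extraction.
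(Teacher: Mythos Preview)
Your proposal has a genuine gap at the very first step: extracting a subsequence of return times with consecutive gaps in $[t_0,t_0+\eta)$. Mere recurrence of $x_0$ gives you an unbounded set of times $\tau$ with $\phi^{\tau}(x_0)$ close to $x_0$, but places no constraint whatsoever on how these times are distributed; they can be arbitrarily sparse, and no diagonal extraction will manufacture an infinite subsequence whose successive gaps fall into a prescribed window of width $\eta$. Your final paragraph flags this as delicate, but the appeal to ``density of Poincar\'e recurrence times for conservative Anosov flows'' does not resolve it: positive density of returns (which is what ergodicity yields for a.e.\ point) still permits wildly irregular spacing. This is not a technicality; it is precisely the obstruction the actual argument has to get around.

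The paper sidesteps this entirely by using a \emph{single} return time $t_0$ with $x_1:=\phi^{t_0}(x_0)\in B(x_0,\delta/3)$, and never asks $x_0$ to return again. Instead it iterates the Bowen bracket: given $y_{n-1}\in B(x_0,\delta/3)$, form $z_n\in W^{cu}_{\eta}(x_1)\pitchfork W^{ss}_{\eta}(y_{n-1})$, write $z_n=\phi^{s_n}(\theta_n)$ with $\theta_n\in W^{uu}_{\eta}(x_1)$ and $|s_n|<\eta$, and set $y_n:=\phi^{-t_0}(\theta_n)$. The bound $|s_n|<\eta$ comes \emph{for free} from the local product structure (it is the center coordinate of the bracket), not from any spacing of recurrence times of $x_0$. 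Backward contraction along $W^{uu}$ forces $y_n\in B(x_0,\delta/3)$ again, so the iteration continues indefinitely. Lemma~\ref{L1N} then bounds the accumulated error exactly as you anticipated; one passes to subsequential limits $y_n\to y$, $s_n\to s$, and $O^{+}(y)$ shadows forward with the single constant transition time $s$. The backward half is built symmetrically to produce $z$, and one final bracket $w=[z,y]$ glues the two pieces. Thus the role you assigned to equispaced returns of $x_0$ is in fact played by the iterated bracket construction, and the sequence $\{s_n\}$ feeding Lemma~\ref{L1N} consists of center shifts from the product structure rather than gap deviations of return times.
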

\begin{proof}
Given $\epsilon>0$, take $\delta>0$ and $\eta>0$ from Theorem \ref{teo1.7}. Since $x_{0}\in \text{Rec}(\phi^t)$ then consider $t_{0}\in \mathbb{R}$ such that $\phi^{t_{0}}(x_{0})\in B(x_{0}, \delta/3)$ and $t_0>2\eta$. % (soon we will choose $t_0$ appropriately).

%By the hypothesis $\emph{vol}(M)<\infty$, we still have a local product structure (see Theorem \ref{teo1.9}), 
%From Theorem \ref{teo1.7} there exists $\delta>0$ and $\eta>0$ such that for all $x,y \in B(x_{0},\delta)$ we have $W_{\eta}^{cu}(x)\pitchfork W_{\eta}^{ss}(y)=\{[x,y]\}\in B(x_{0},\epsilon)$. 
 
%First we prove the following claim:\textcolor{red}{Maybe Change for a Lemma}.\\
%\noindent  \textbf{Claim:} For each $t_0>2\eta$  and any sequence $\{c_i\}$ with $|c_i|<\eta$, the sequence of functions $P_{m, \{c_i\}}\colon [0,t_0]\to \mathbb{R}$, define by 

%is uniformly bounded. Here $\lambda\in(0,1)$ from definition of Anosov flow.

Consider the real function $f(t)=\dfrac{\epsilon(1-\lambda^{\frac{t}{2}})}{2-\lambda^{\frac{t}{2}}}$. Observe that $\displaystyle\lim_{t\to +\infty}f(t)=\frac{\epsilon}{2}$, then given $\epsilon>0$ we can choose $\delta>0$, $\eta>0$ (as in Theorem \ref{teo1.7}, which depends on $x_0$) and $t_{0}>2\eta$ such that $x_{1}:=\phi^{t_{0}}(x_{0})\in B(x_{0},\frac{\delta}{3})$ and 

\item \begin{eqnarray}\label{ec1}
{C}{\eta}<\frac{\epsilon}{4}<f(t_{0})=\frac{\epsilon}{3K}, \, \,  \, \,C\lambda^{t_{0}}\eta<\delta/3, \,\,\ \text{and}\,\,\, \frac{\delta}{3}<\epsilon
\end{eqnarray}
where $C$ is the constant on the definition of Anosov flow.\\

From the local product structure (see Theorem \ref{teo1.7}) there is $z_{1}\in W_{\eta}^{cu}(x_{1})\pitchfork W_{\eta}^{ss}(x_{0})$, and consequently  $\theta_{1}\in W_{\eta}^{uu}(x_{1})$ such that 

$$\phi^{s_{1}}(\theta_{1})=z_{1}, \, \, \text{for some}\,\, |s_{1}|<\eta$$
and 
$$d(\phi^{-t}(\theta_{1}),\phi^{-t}(x_{1}))\leq C\lambda^{t}\eta, \,\,\ t\geq 0.$$
%\begin{eqnarray*}
%z_{1}\in W_{\eta}^{cu}(x_{1})&\Longrightarrow& \exists \hspace{0.1cm}\theta_{1}\in W_{\eta}^{uu}(x_{1}): \varphi^{s_{1}}(\theta_{1})=z_{1}, \, \, \text{for some}\,\, |s_{1}|<\eta,\\
%\theta_{1}\in W_{\eta}^{uu}(x_{1})&\Longrightarrow& d(\varphi^{-t}(\theta_{1}),\varphi^{-t}(x_{1}))\leq \lambda^{t}\eta,\hspace{0.2cm} \forall\hspace{0.1cm} t\geq 0,
%\end{eqnarray*}
\noindent Take $y_{1}=\phi^{-t_{0}}(\theta_{1})$, then for all $0\leq t\leq t_{0}$ we have
\begin{eqnarray}\label{ec2}
d(\phi^{t}(y_{1}),\phi^{t}(x_{0}))&=&d(\phi^{t-t_{0}}(\theta_{1}),\phi^{t-t_{0}}(\phi^{t_{0}}(x_{0})))\nonumber\\
&=&d(\phi^{t-t_{0}}(\theta_{1}),\phi^{t-t_{0}}(x_{1}))\nonumber\\
&\leq& C \lambda^{t_{0}-t}\eta.
\end{eqnarray}
From (\ref{ec1}) and (\ref{ec2}), we have that $d(y_{1},x_{0})\leq C\lambda^{t_{0}}\eta<\delta/3$, this implies $y_{1}\in B(x_{0},\delta/3)$, since $x_{1}\in B(x_{0},\delta/3)$, then $d(x_1,y_1)<\delta$. Therefore, by the local product structure (see Theorem \ref{teo1.7}) there is $z_{2}\in W_{\eta}^{cu}(x_{1})\pitchfork W_{\eta}^{ss}(y_{1})$. So, we take  $\theta_{2}\in W_{\eta}^{uu}(x_{1})$ such that 
$$\phi^{s_{2}}(\theta_{2})=z_{2}, \, \, \text{for some}\,\, |s_{2}|<\eta$$
and
\begin{equation}\label{ec3}
d(\phi^{-t}(\theta_{2}),\phi^{-t}(x_{1}))\leq C\lambda^{t}\eta, \,\,\, \,\,\,\ d(\phi^{t}(z_{2}),\phi^{t}(y_{1}))\leq C\lambda^{t}\eta, \,\,\ t\geq 0.
\end{equation} 

\noindent Consider $y_{2}:=\phi^{-t_{0}}(\theta_{2})$. Then, $\phi^{t_{0}+s_{2}}(y_{2})=\phi^{s_{2}}(\theta_{2})=z_{2}$. Moreover,  for all $0\leq t\leq t_{0}$ we have
\begin{equation}\label{ec4}
d(\phi^{t}(y_{2}),\phi^{t}(x_{0}))=d(\phi^{t-t_{0}}(\theta_{2}),\phi^{t-t_{0}}(x_{1}))\leq C \lambda^{t_{0}-t}\eta.
\end{equation}

\noindent From (\ref{ec2}) and (\ref{ec3}) we obtain 

\begin{eqnarray}\label{ec5}
d(\phi^{t}(z_{2}),\phi^{t}(x_{0}))&\leq&d(\phi^{t}(z_{2}),\phi^{t}(y_{1}))+d(\phi^{t}(y_{1}),\phi^{t}(x_{0}))\nonumber\\
&\leq&C\lambda^{t}\eta+C\lambda^{t_{0}-t}\eta=C(\lambda^{t}+\lambda^{t_{0}-t})\eta.
\end{eqnarray}
\noindent From (\ref{ec1}) and (\ref{ec4}) we have that $d(y_{2},x_{0})\leq C\lambda^{t_{0}}\eta<\delta/3$. Thus, $y_{2},x_1\in B(x_{0},\delta/3)$ and again by local product structure (see Theorem \ref{teo1.7}) there is $z_{3}\in W_{\eta}^{cu}(x_{1})\pitchfork W_{\eta}^{ss}(y_{2})$.
%%%%%%%%%%%%%%%%%%%%%%%%%%%%%%%%%%%%%%%%%%%
So, we take  $\theta_{3}\in W_{\eta}^{uu}(x_{1})$ such that 
$$\phi^{s_{3}}(\theta_{3})=z_{3}, \, \, \text{for some}\,\, |s_{3}|<\eta$$
and
\begin{equation}\label{ec6'}
d(\phi^{-t}(\theta_{3}),\phi^{-t}(x_{1}))\leq C\lambda^{t}\eta, \,\,\, \,\,\,\ d(\phi^{t}(z_{3}),\phi^{t}(y_{2}))\leq C\lambda^{t}\eta, \,\,\ t\geq 0.
\end{equation}

\noindent Considering $y_{3}=\phi^{-t_{0}}(\theta_{3})$, for all $0\leq t\leq t_{0}$ we have
\begin{eqnarray}\label{ec7}
d(\phi^{t}(y_{3}),\phi^{t}(x_{0}))&=&d(\phi^{t-t_{0}}(\theta_{3}),\phi^{t-t_{0}}(x_{1}))\nonumber\\
&\leq& C\lambda^{t_{0}-t}\eta.
\end{eqnarray}
From  (\ref{ec4}) and (\ref{ec6}) we get
\begin{eqnarray}\label{ec8}
d(\phi^{t}(z_{3}),\phi^{t}(x_{0}))&\leq&d(\phi^{t}(z_{3}),\phi^{t}(y_{2}))+d(\phi^{t}(y_{2}),\phi^{t}(x_{0}))\nonumber\\
&\leq& C\lambda^{t}\eta+ C\lambda^{t_{0}-t}\eta=C(\lambda^{t}+\lambda^{t_{0}-t})\eta.
\end{eqnarray}
Since $\phi^{t_{0}+s_{2}}(y_{2})=z_{2}$, by (\ref{ec5}) and (\ref{ec6'}) we get
\begin{eqnarray}\label{ec9}
d(\phi^{t}(\phi^{t_{0}+s_{2}}(z_{3})),\phi^{t}(x_{0}))&\leq&d(\phi^t(\phi^{t_{0}+s_{2}}(z_{3}))),\phi^{t}(\phi^{t_{0}+s_{2}}(y_{2})))+d(\phi^{t}(z_{2}),\phi^{t}(x_{0}))\nonumber\\
&\leq&C\lambda^{t+t_{0}+s_{2}}\eta+C(\lambda^{t}+\lambda^{t_{0}-t})\eta\nonumber\\
&=&C(\lambda^{t+t_{0}+s_{2}}+\lambda^{t}+\lambda^{t_{0}-t})\eta
\end{eqnarray}\\
%\textcolor{red}{PAREI AQUI}
One more time, from (\ref{ec1}) and (\ref{ec7}), % $d(y_{3},x_{0})\leq \lambda^{t_{0}}\eta<\delta$, this implies 
 $y_{3},x_1\in B(x_{0},\delta/3)$ and from Theorem \ref{teo1.7} there is $z_{4}\in W_{\eta}^{cu}(x_{1})\pitchfork W_{\eta}^{ss}(y_{3})$. Thus, we take  $\theta_{4}\in W_{\eta}^{uu}(x_{1})$ such that 
$$\phi^{s_{4}}(\theta_{4})=z_{4}, \, \, \text{for some}\,\, |s_{4}|<\eta$$
and
\begin{equation}\label{ec6}
d(\phi^{-t}(\theta_{4}),\phi^{-t}(x_{1}))\leq C\lambda^{t}\eta, \,\,\, \,\,\,\ d(\phi^{t}(z_{4}),\phi^{t}(y_{3}))\leq C\lambda^{t}\eta, \,\,\ t\geq 0.
\end{equation}

Considering $y_{4}=\phi^{-t_{0}}(\theta_{4})$ and proceeding as we did in the case of $y_{1}$, $y_{2}$ and $y_{3}$, we get that for all $0\leq t\leq t_{0}$ holds
\begin{eqnarray*}
d(\phi^{t}(y_{4}),\phi^{t}(x_{0}))&\leq& C\lambda^{t_{0}-t}\eta\\
d(\phi^{t}(z_{4}),\phi^{t}(x_{0}))&\leq& C(\lambda^{t}+\lambda^{t_{0}-t})\eta\\
d(\phi^{t}(\phi^{t_{0}+s_{3}}(z_{4})),\phi^{t}(x_{0}))&\leq& C(\lambda^{t+t_{0}+s_{3}}+\lambda^{t}+\lambda^{t_{0}-t})\eta\\
d(\phi^{t}(\phi^{2t_{0}+s_{2}+s_{3}}(z_{4})),\phi^{t}(x_{0}))&\leq& C(\lambda^{t+2t_{0}+s_{2}+s_{3}}+\lambda^{t+t_{0}+s_{2}}+\lambda^{t_{0}-t}+\lambda^{t})\eta.
\end{eqnarray*}
Following this process inductively, we get four sequence $\{y_{n}\}_{n\geq 0}$, $\{\theta_{n}\}_{n\geq 0}$, $\{z_{n}\}_{n\geq0}$ on $N$ and $\{s_{n}\}$ on $\mathbb{R}$ such that

\noindent $z_{n}\in W_{\eta}^{cu}(x_{1})\pitchfork W_{\eta}^{ss}(y_{n-1})$ $y_{n}=\varphi^{-t_{0}}(\theta_{n})$,  $\theta_n\in W_{\eta}^{uu}(x_{1})$, and $\phi^{s_{n}}(\theta_{n})=z_{n}$ with $|s_{n}|<\eta$. Moreover, for $0\leq t\leq t_{0}$ 
\begin{eqnarray}\label{ec11}
d(\phi^{t}(y_{n}),\phi^{t}(x_{0}))&\leq& C\lambda^{t_{0}-t}\eta\nonumber\\
d(\phi^{t}(\phi^{t_{0}+s_{n}}(y_{n})),\phi^{t}(x_{0}))&\leq& C(\lambda^{t_{0}-t}+\lambda^{t})\eta\label{ec11'}\\
d(\phi^{t}(\phi^{2t_{0}+\sum_{i=0}^{1}s_{n-i}}(y_{n})),\phi^{t}(x_{0}))&\leq& C(\lambda^{t_{0}-t}+\lambda^{t}+\sum_{j=1}^{1}\lambda^{t+jt_{0}+\sum_{i=0}^{j-1}s_{(n-1)+i}})\eta\nonumber \\
d(\phi^{t}(\phi^{3t_{0}+\sum_{i=0}^{2}s_{n-i}}(y_{n})),\phi^{t}(x_{0}))&\leq& C(\lambda^{t_{0}-t}+\lambda^{t}+\sum_{j=1}^{2}\lambda^{t+jt_{0}+\sum_{i=0}^{j-1}s_{(n-2)+i}})\eta\nonumber\\
&\vdots & \nonumber \\
d(\phi^{t}(\phi^{(n-1)t_{0}+\sum_{i=0}^{n-2}s_{n-i}}(y_{n})),\phi^{t}(x_{0}))&\leq& C(\lambda^{t_{0}-t}+\lambda^{t}+\sum_{j=1}^{n-2}\lambda^{t+jt_{0}+\sum_{i=0}^{j-1}s_{2+i}})\eta.
\end{eqnarray}

\noindent Considering the sequence $\underline{s}=\{s_j\}$, then the right side of all last inequalities is exactly $C\cdot P_{k,\underline{s}}(t)\cdot\eta$, for  $k\in \{1,\dots, n-2\}$.  

\noindent Thus, since $t_0>2\eta$ and $|s_{i}|<\eta$, then for $0\leq t\leq t_0$, Lemma \ref{L1N} and (\ref{ec1}) provide 
\begin{eqnarray}\label{ec14}
d(\phi^{t}(y_{n}),\phi^{t}(x_{0}))\leq CK\eta<\epsilon/3,
\end{eqnarray}
and 
\begin{eqnarray}\label{ec15}
d(\phi^{t}(\phi^{jt_{0}+\sum_{i=0}^{j-1}s_{n-i}}(y_{n})),\phi^{t}(x_{0}))\leq CK\eta<\epsilon/3.
\end{eqnarray}
for each $j=1,2,...,n-1$.
%\tcr{PAREI AQUI}

Taking $t=0$ in the first inequality of (\ref{ec11}) we have $d(y_{n},x_{0})\leq C\lambda^{t_{0}}\eta<\delta/3$. This implies $y_{n}\in B(x_{0},\delta/3)\subset B[x_{0},\delta/3]\subset B(x_{0},\epsilon)$, then passing to a subsequence of $\{y_{n}\}_{n\geq 0}$ if necessary, there is $y\in B(x_{0},\epsilon)$ such that $\displaystyle \lim_{n\to+\infty}y_{n}=y$. Thus, from (\ref{ec14}) and the continuity of the flow %we have %for all $n\in\mathbb{N}$, $d(\phi^{t}(y_{n}),\phi^{t}(x_{0}))\leq\epsilon/3$, for $0\leq t\leq t_{0}$, by continuity of flow we have
\begin{eqnarray*}
d(\phi^{t}(y),\phi^{t}(x_{0}))=\lim_{n\to+\infty}d(\phi^{t}(y_{n}),\varphi^{t}(x_{0}))\leq\epsilon/3,\hspace{0.2cm}\forall\hspace{0.1cm}0\leq t\leq t_{0}.
\end{eqnarray*}
As $|s_{n}|<\eta$, for all $n\in\mathbb{N}$, then again passing to subsequence if necessary, there is $s\in \mathbb{R}$ such that $\displaystyle \lim_{n\to+\infty}s_{n}=s$ and $|s|\leq \eta$. Moreover, note that for all $n\in\mathbb{N}$, $\phi^{t_{0}+s_{n}}(y_{n})=z_{n}$, then by continuity of flow, $\displaystyle \lim_{n\to+\infty} z_n=\lim_{n\to+\infty}\phi^{t_{0}+s_{n}}(y_{n})=\phi^{t_{0}+s}(y):=z$. %We denote by $z=\phi^{t_{0}+s}(y)$. 
So, from (\ref{ec11'}), %\textcolor{red}{for all $n\in\mathbb{N}$, $d(\phi^{t}(z_{n}),\phi^{t}(x_{0}))\leq\epsilon/3$, and}
\begin{eqnarray*}
d(\phi^{t}(\phi^{t_{0}+s}(y)),\phi^{t}(x_{0}))=d(\phi^{t}(z),\phi^{t}(x_{0}))=\lim_{n\to+\infty}d(\phi^{t}(z_{n}),\phi^{t}(x_{0}))\leq\epsilon/3,
\end{eqnarray*}
for all $0\leq t\leq t_{0}$.\\
Fixed $k\in\mathbb{N}$, then 
%continuing like this way, by (\ref{ec15}), for all  we get
\begin{equation}\label{ec16}
d(\phi^{t}(\phi^{k(t_{0}+s)}(y)),\phi^{t}(x_{0}))=
\lim_{n\to +\infty} d(\phi^{t}(\phi^{k(t_0+s_n)}(y_n)),\phi^{t}(x_{0})).
\end{equation}
The right side of the last inequality is bounded by 
\begin{equation}\label{E2N}
\displaystyle \lim_{n\to +\infty} \Big( d(\phi^{t}(\phi^{k(t_0+s_n)}(y_n)),\phi^{t}(\phi^{kt_{0}+\sum_{i=0}^{k-1}s_{n-i}}(y_{n})))+ d(\phi^{t}(\phi^{kt_{0}+\sum_{i=0}^{k-1}s_{n-i}}(y_{n})),\phi^t(x_0))\Big).
\end{equation}
Since $\displaystyle \lim_{n\to+\infty}s_{n}=s$, then  $$\lim_{n\to +\infty}\Big|kt_{0}+\sum_{i=0}^{k-1}s_{n-i}-k(t_0+s_n)\Big|=\lim_{n\to +\infty}\Big|\sum_{i=0}^{k-1}s_{n-i}-ks_n\Big|=0,$$
consequently,
\begin{equation}\label{E3N}
\lim_{n\to +\infty}  d(\phi^{t}(\phi^{k(t_0+s_n)}(y_n)),\phi^{t}(\phi^{kt_{0}+\sum_{i=0}^{k-1}s_{n-i}}(y_{n})))=0.
\end{equation}

\noindent From (\ref{ec15}), (\ref{ec16}), (\ref{E2N}), and  (\ref{E3N}) we obtain  

\begin{equation}\label{eq1*}
d(\phi^{t}(\phi^{k(t_{0}+s)}(y)),\phi^{t}(x_{0}))\leq\epsilon/3,  \,\, 0\leq t\leq t_{0}.
\end{equation}
In summary,  $y\in B[x_{0},\delta/3]$ and from (\ref{eq1*}) we have that $O^{+}(y)$ $\epsilon$-shades forward by piecewise the orbit segment $\phi^{[0,t_{0}]}(x_{0})$ %=\{\varphi^{t}(x_{0}): \hspace{0.2cm}0\leq t\leq t_{0}\},$$
 with transition times of size $|s|\leq\eta$. \\
Analogously, %to the construction of $y\in SM$, as $O^{+}(y)=\{\varphi^{t}(y):\hspace{0.2cm}t\geq 0\}$ $\epsilon$-shadowed forward by piecewise the orbit segment $\varphi^{[0,t_{0}]}(x_{0})$,
 we can find $z\in B[x_{0},\delta/3]$ such that $O^{-}(z)$ $\epsilon$-shades backward by piecewise the orbit segment $\phi^{[0,t_{0}]}(x_{0})$ with transition times of size $|r|\leq \eta$, that is, for each $k\in \mathbb{N}\cup\{0\}$
\begin{equation}\label{eq2*}
d(\phi^{-t}(\phi^{-k(t_{0}+r)}(z)),\phi^{-t}(x_{0}))\leq\frac{\epsilon}{3},  \,\, 0\leq t\leq t_{0}. 
\end{equation}
 \ \\
To conclude our proof, note that $d(z,y)<\frac{\delta}{3}+\frac{\delta}{3}=\frac{2\delta}{3}<\delta$, then for the choosing of $\delta$ we have that 
$$w:=[z,y]=W_{\eta}^{cu}(z)\pitchfork W_{\eta}^{ss}(y)\in B(x_0,\epsilon).$$
Assume that $w=\phi^{r_1}(\tilde{z})$, with $\tilde{z}\in W_{\eta}^{uu}(z)$ and 
$|r_1|<\eta$.
From (\ref{eq1*}) and (\ref{eq2*}) we have that
\begin{eqnarray}\label{E4N}
d(\phi^{t+k(t_{0}+s)}(w),\phi^{t}(x_{0}))&\leq & d(\phi^{t+k(t_{0}+s)}(w),\phi^{t+k(t_{0}+s)}(y))+  d(\phi^{t+k(t_{0}+s)}(y),\phi^{t}(x_{0})) \nonumber\\
&<&\eta +\frac{\epsilon}{3}<\epsilon,
\end{eqnarray}
and 
\begin{eqnarray}\label{E4'N}
d(\phi^{-t-k(t_{0}+r)-r_1}(w),\phi^{-t}(x_{0}))&\leq & d(\phi^{-t-k(t_{0}+r)-r_1}(w),\phi^{-t-k(t_{0}+r)}(z))+  d(\phi^{-t-k(t_{0}+r)}(z),\phi^{-t}(x_{0})) \nonumber \\
&<&\eta +\frac{\epsilon}{3}<\epsilon.
\end{eqnarray}
for all $t\in[0,t_0]$.
\noindent Hence the orbit $O(w)$, $\epsilon$-shades by piecewise the orbit segment $\phi^{[0,t_{0}]}(x_{0})$. 
\end{proof}

%\textcolor{red}{PAREI AQUI}
\subsection{Proof of the Theorem \ref{teo2.7}}
Proposition \ref{pr2} and the local product structure allow us to conclude that for Anosov geodesic flow of a complete Riemannian manifold of finite volume, the periodic orbits are dense.

\begin{proof}[\emph{\textbf{Proof of the Theorem \ref{teo2.7}}}]

As $\text{vol}(M)<\infty$, then $SM=\overline{\text{Rec}(\phi^t)}$. Then, it is enough to prove that every recurrent point can be approximated by a periodic orbit. %So, assume that %the non-wandering set $\Omega(\phi^t)=SM$ (see \cite{Pa}).  Now consider the recurrent points set $$Rec(\varphi)=\{x\in SM: x\in \omega(x)\}.$$

%By hypothesis we also know that the Lebesgue measure is $\varphi$-invariant, thus Liouville almost every point on $SM$ is recurrent and $\overline{Rec(\varphi)}=SM$.

%Let $x_{0}\in SM$, if $x_{0}\in SM\backslash\text{Rec}(\phi^t)$ then we can take $x_{1}\in \text{Rec}(\varphi)$ sufficiently close to $x_{0}$. Thus we can suppose that 
Let $x_{0}\in \text{Rec}(\phi^t)$, then given $\epsilon>0$, as the proof of Proposition \ref{pr2} we obtain $\delta>0$ and $\eta>0$ satisfying
\begin{eqnarray}\label{ec17}
\forall \hspace{0.1cm} x,y\in B(x_{0},\delta):\hspace{0.2cm}W_{\eta}^{cu}(x)\pitchfork W_{\eta}^{ss}(y)=\{w\}.
\end{eqnarray}

\noindent Moreover, the numbers $\delta, \eta$ can be  chosen to satisfy  (\ref{ec1}).
Now taking $l\in\mathbb{N}$ such that $\frac{\eta}{l}<\delta$ and such that  for all  $z,w  \in B(x_{0}, \frac{\eta}{l})$, $|r|\leq 3\eta$ 
\ \\
%\textcolor{red}{pensar em diminuir o $\eta$}
\begin{equation}\label{E5N}
 d(\phi^r(z),\phi^r(w))\leq \eta,
\end{equation}
and also $\phi^{t_{0}}(x_0)\in B(x_0,\frac{\eta}{4l})$.\\
 Thus, from the proof Proposition \ref{pr2} there is $y\in B(x_{0},\frac{\eta}{2l})\subset B(x_{0},\epsilon)$ such that orbit $O(y)$, $\frac{\eta}{4l}$-shades piecewise the orbit arc $\phi^{[0,t_{0}]}(x_{0})$ for some $t_{0}>0$, this means that for some $|s|\leq\eta$ (see (\ref{E4N})),  $|r_1|\leq \eta, |r_2|\leq \eta$ (see (\ref{E4'N})) we have 

\begin{eqnarray}\label{ec18}
d(\phi^{t}(\phi^{j(t_{0}+s)}(y)),\phi^{t}(x_{0}))\leq \frac{\eta}{4l}, \,\,\,\,\forall \, j\in\mathbb{N}\cup\{0\}, \,\, \forall \, 0\leq t\leq t_{0},
\end{eqnarray}
and 
\begin{eqnarray}\label{ec18'}
d(\phi^{-t-j(t_{0}+r_1)-r_2}(y),\phi^{-t}(x_{0}))\leq \frac{\eta}{4l}, \,\,\,\,\forall \, j\in\mathbb{N}\cup\{0\}, \,\,\,\,\, \forall \, 0\leq t\leq t_{0}.
\end{eqnarray}

%Since $\phi^{t_{0}}(x_0)\in B(x_0,\frac{\eta}{4l})$, then from (\ref{ec18})
%$$d(\phi^{-t-k(t_{0}+r)-r_1}(w),\phi^{-t}(x_{0}))$$
\noindent \textbf{Claim:} The point $y$ is unique.
\begin{proof}[\emph{\textbf{Proof of Claim}}]

Assume that there is $\tilde{y}$ such that the orbit $O(\tilde{y})$, $\frac{\eta}{4l}$-shades piecewise the arc orbit $\phi^{[0,t_{0}]}(x_{0})$, \emph{i.e.},  $O(\tilde{y})$ satisfy (\ref{ec18}) and (\ref{ec18'}) for some $|\tilde{s}|, |\tilde{r}_1|, |\tilde{r}_2|\leq \eta$.

%\begin{eqnarray}\label{ec19}
%\hspace{0.2cm}d(\phi^{t}(\phi^{j(t_{0}+\tilde{s})}(\tilde{y})),\phi^{t}(x_{0}))\leq \frac{\eta}{2l}, \,\,\, \forall j\in\mathbb{N}\hspace{0.2cm},\forall\hspace{0.1cm}0\leq t\leq t_{0}.
%\end{eqnarray}
\noindent Therefore, for all $j\in\mathbb{N}\cup\{0\}$ and $0\leq t\leq t_{0}$ hold
\begin{eqnarray}\label{ec20}
d(\phi^{t}(\phi^{j(t_{0}+s)}(y)),\phi^t(\phi^{j(t_{0}+\tilde{s})}(\tilde{y})))&\leq&d(\phi^{t}(\phi^{j(t_{0}+s)}(y)),\phi^{t}(x_{0}))+d(\phi^{t}(\phi^{j(t_{0}+\tilde{s})}(\tilde{y})),\phi^{t}(x_{0}))\nonumber\\
&\leq&\dfrac{\eta}{4l}+\dfrac{\eta}{4l}=\dfrac{\eta}{2l}.
\end{eqnarray}
Analogously, 
\begin{equation}\label{E6'N}
d(\phi^{-t-j(t_{0}+r_1)-r_2}(y),\phi^{-t-j(t_{0}+\tilde{r}_1)-\tilde{r}_2}(\tilde{y}))\leq \frac{\eta}{2l}.
\end{equation}
In particular, for $t=t_0$ in (\ref{ec18}), since $\phi^{t_{0}}(x_0)\in B(x_0,\frac{\eta}{4l})$, then %from (\ref{ec18}) we have 
$$d(\phi^{(j+1)t_0+js}(y), x_0)\leq \frac{\eta}{2l}\,\,\, \text{and}\,\,\, d(\phi^{(j+1)t_0+j\tilde{s}}(\tilde{y}), x_0)\leq \frac{\eta}{2l}.$$

%\tcr{ (\ref{ec20}) and (\ref{E6'N}) also provide that for all $j\in\mathbb{N}\cup\{0\}$
%\begin{equation}\label{E6N}
%d(\phi^{(j+1)t_0+js}(y), \phi^{(j+1)t_0+j\tilde{s}}(\tilde{y}))\leq \frac{\eta}{l}\,\, \,\text{and}\,\,\, d(\phi^{-(j+1)t_{0}-jr_1-r_2}(y),\phi^{-%(j+1)t_{0}-j\tilde{r}_1-\tilde{r}_2}(\tilde{y}))\leq \frac{\eta}{l}.
%\end{equation}}
%\textcolor{red}{PAREI AQUI}\\

\noindent  From (\ref{E5N}) we have % and (\ref{E6N}) we have 
\begin{equation}\label{E7N}
d(\phi^{(j+1)t_0+js+r}(y), \phi^{(j+1)t_0+j\tilde{s}+r}(\tilde{y}))\leq \eta, \,\, \forall j\in\mathbb{N}\cup\{0\}, |r|\leq \eta.
\end{equation}
%\tcr{\begin{eqnarray}\label{ec21}
%\forall\hspace{0.1cm}j\in\mathbb{Z},\hspace{0.2cm}d(\phi^{j(t_{0}+r)}(\tilde{y}),\phi^{j(t_{0}+r)}(y))\leq\dfrac{\eta}{l},\hspace{0.2cm}\forall\hspace{0.1cm}r\in (0,s)
%\end{eqnarray}}

\noindent Finally, since $\max\{|s|, |\tilde{s}|\}\leq \eta$, then (\ref{ec20}) and (\ref{E7N}) give us
\begin{eqnarray}\label{ec22}
d(\phi^{t}(\tilde{y}),\phi^{t}(y))\leq \eta,\hspace{0.2cm}\forall\hspace{0.1cm}t\geq 0.
\end{eqnarray}
Consequently, $\tilde{y}\in W_{\eta}^{ss}(y)$.
%Analogously, from (\ref{E5N}), (\ref{E6'N}), and (\ref{E6N}) we can conclude.\\ %From Again using the Proposition  \ref{pr2}

\noindent If  $t=0$ in (\ref{ec18'})
$$d(\phi^{-j(t_{0}+r_1)-r_2}(y),x_{0})\leq \frac{\eta}{4l}, \,\,\,\,\text{and}\,\,\, d(\phi^{-j(t_{0}+r_1)-r_2}(\tilde{y}),x_{0})\leq \frac{\eta}{4l}.$$%\forall \, j\in\mathbb{N}\cup\{0\}.$$
%and this last equation implies

%\forall\hspace{0.1cm}t\geq 0,\hspace{0.2cm}d(\varphi^{t}(\tilde{y}),\varphi^{t}(y))&\leq&\eta\hspace{0.2cm}\Longrightarrow\hspace{0.2cm}\tilde{y}\in W_{\eta}^{ss}(y),\\
\noindent From (\ref{E5N}) we have 
\begin{equation}\label{ec22"}
d(\phi^{-j(t_{0}+r_1)-r_2+r}(y), \phi^{-j(t_{0}+r_1)-r_2+r}(\tilde{y}))\leq \eta, \,\, \,\,\, \forall j\in\mathbb{N}\cup\{0\}, |r|\leq \eta.
\end{equation}
Joining (\ref{E6'N}) and (\ref{ec22"}) we have 
\begin{eqnarray}\label{ec22'}
d(\phi^{-t}(\tilde{y}),\phi^{-t}(y))\leq \eta,\hspace{0.2cm}\forall\hspace{0.1cm}t\geq 0.
\end{eqnarray}
Thus, $\tilde{y}\in W_{\eta}^{uu}(y)\subset W_{\eta}^{cu}(y)$, and consequently, $\tilde{y}\in W_{\eta}^{ss}(y)\pitchfork W_{\eta}^{cu}(y)$. However,  $y\in B(x_{0},\frac{\eta}{l})\subset B(x_{0},\delta)$, then from (\ref{ec17}) it should be $\tilde{y}=y$.
\end{proof}
As an important observation, note that unicity in the last claim is also valid even if $|r_2|<3\delta$. Thus, substituting $y$ by $\tilde{y}=\phi^{t_{0}+s}(y)$ in (\ref{ec18}) and (\ref{ec18'}), we have  

\begin{eqnarray*}%\label{ec18}
d(\phi^{t}(\phi^{(j+1)(t_{0}+s)}(y)),\phi^{t}(x_{0}))\leq \frac{\eta}{4l}, \,\,\,\,\forall \, j\in\mathbb{N}\cup\{0\}, \,\, \forall \, 0\leq t\leq t_{0},
\end{eqnarray*}
and 
\begin{eqnarray*}%\label{ec18'}
d(\phi^{-t-(j-1)(t_{0}+r_1)s-r_1-r_2}(y),\phi^{-t}(x_{0}))\leq \frac{\eta}{4l}, \,\,\,\,\forall \, j\in\mathbb{N}\cup\{0\}, \,\,\,\,\, \forall \, 0\leq t\leq t_{0}.
\end{eqnarray*}

Since $|s-r_1-r_2|\leq 3\eta$, then the last claim provides that $y=\tilde{y}=\phi^{t_{0}+s}(y)$. Therefore, $O(y)$ is a periodic orbit such that $y\in B(x_{0},\epsilon)$.

\end{proof}

%%%%%%%%%%%%%%%%%%%%%%%%%%%%%%%%%%%%%%%%%%%%%%%%%%%%%%%%%%%
\section{Proof of Corollary \ref{C1-Main-T}}
In this section, we use Theorem \ref{teo2.7} to prove the transitive of Anosov geodesic flow and the density of the central stable and central unstable manifold in manifolds of finite volume, a similar result holds for conservative Anosov flows. So, the proof of Corollary \ref{C1-Main-T} will be given by Proposition \ref{PROP} and  Theorem \ref{T-Final}.\\
The main ingredients are the stable and unstable manifolds, which satisfy the following properties (also valid for the stable case).
\begin{enumerate}
\item[\text{(a)}] For all $t\in \mathbb{R}$ holds $\phi^{t}(W^{uu}(x))=W^{uu}(\phi^{t}(x)).$ 
\item[\text{(b)}] For all $t\in \mathbb{R}$ holds $\phi^{t}(W^{cu}(x))=W^{cu}(x).$ 
\item[\text{(c)}] If $y\in W^{cu}(x)$, then $W^{cu}(y)= W^{cu}(x).$ 
\end{enumerate}
As a consequence of these three properties, we can prove the following:
\begin{lem}\label{lem2} For any $x\in SM$ hold 
\begin{itemize}
\item $W^{cu}(x)=\bigcup_{y\in W^{cu}(x)}W^{cu}(y)$.
\item $\overline{W^{cu}(x)}=\bigcup_{y\in \overline{W^{cu}(x)}}W^{cu}(y)$.
\end{itemize}
\end{lem}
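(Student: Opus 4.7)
The plan is to derive both equalities from property (c), with the second one requiring, in addition, the continuous dependence of the strong-unstable manifold on its base point. For the first identity, the inclusion $\supseteq$ is immediate from property (c): if $y\in W^{cu}(x)$ then $W^{cu}(y)=W^{cu}(x)$, so every element of $W^{cu}(y)$ lies in $W^{cu}(x)$. For $\subseteq$, any $z\in W^{cu}(x)$ satisfies $z\in W^{uu}(z)\subseteq W^{cu}(z)$, so choosing $y=z$ places $z$ in the right-hand union. The same trivial remark handles $\subseteq$ in the second identity.

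The only real content is therefore the inclusion $\supseteq$ in the second identity, namely showing that $\overline{W^{cu}(x)}$ is a union of full center-unstable leaves: if $y\in\overline{W^{cu}(x)}$ and $z\in W^{cu}(y)$, then $z\in\overline{W^{cu}(x)}$. I would write $z=\phi^{t_0}(w)$ with $w\in W^{uu}(y)$, and choose $k\geq 0$ large enough that $\phi^{-k}(w)$ belongs to the local strong-unstable manifold $W^{uu}_{\epsilon(\phi^{-k}(y))}(\phi^{-k}(y))$. Picking $y_n\in W^{cu}(x)$ with $y_n\to y$, the points $\phi^{-k}(y_n)$ lie eventually in a compact neighborhood $K$ of $\phi^{-k}(y)$, so item (2) of the stable/unstable manifold theorem supplies a uniform size $\epsilon_K>0$ together with continuous dependence of the local leaves on the base point. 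From this I extract $w_n\in W^{uu}_{\epsilon_K}(\phi^{-k}(y_n))$ with $w_n\to \phi^{-k}(w)$, and set $z_n:=\phi^{t_0+k}(w_n)$. Then $z_n\in W^{uu}(\phi^{t_0}(y_n))\subseteq W^{cu}(y_n)$, and property (c) applied at $y_n\in W^{cu}(x)$ upgrades this to $z_n\in W^{cu}(x)$; continuity of the flow finally gives $z_n\to z$, so $z\in\overline{W^{cu}(x)}$.

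The main obstacle is precisely the continuous dependence of local strong-unstable manifolds on their base points in the non-compact setting, since a priori the sizes $\epsilon(\cdot)$ can shrink. The remedy is to localize: the whole approximation argument takes place inside a small compact neighborhood of $\phi^{-k}(y)$, where item (2) of the stable/unstable manifolds theorem restores a uniform lower bound on $\epsilon(\cdot)$ and standard $C^r$-continuity of the local leaves. Beyond this technical point, the proof is purely formal and reduces everything to the leaf-invariance of $W^{cu}$ encoded by property (c).
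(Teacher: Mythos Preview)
Your argument is correct and follows essentially the same route as the paper: both reduce the first identity to property~(c), and for the nontrivial inclusion $\supseteq$ in the second identity both write $z=\phi^{t}(\tilde z)$ with $\tilde z\in W^{uu}(y)$, approximate $y$ by $y_n\in W^{cu}(x)$, use continuity of the unstable foliation to find $\tilde z_n\in W^{uu}(y_n)$ converging to $\tilde z$, and push forward by the flow. The only difference is that you make explicit the localization step (flowing back by $\phi^{-k}$ into a compact neighborhood to obtain a uniform $\epsilon_K$), which the paper absorbs into the phrase ``continuity of the unstable manifold.''
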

\begin{proof}
Since $x\in W^{cu}(x)$, then it is enough to prove that $\bigcup_{y\in W^{cu}(x)}W^{cu}(y)\subset W^{cu}(x)$. In fact: let $z\in \bigcup_{y\in W^{cu}(x)}W^{cu}(y)$, then there is $y\in  W^{cu}(x)$ such that $z\in  W^{cu}(y)= W^{cu}(x)$ from property (c). This concludes the proof of the first item. To proof the second part, note that as $y\in W^{cu}(y)$, then 
$\overline{W^{cu}(x)}\subset \bigcup_{y\in \overline{W^{cu}(x)}}\{y\}\subset \bigcup_{y\in \overline{W^{cu}(x)}}W^{cu}(y)$. Moreover, if $z\in \bigcup_{y\in \overline{W^{cu}(x)}}W^{cu}(y)$, $z=\phi^{t_1}(\tilde{z})$ with $\tilde{z}\in W^{uu}(y)$, $y\in \overline{W^{cu}(x)}$. In particular, there is $y_k\in W^{cu}(x)$ such that $y_k\to y$. The continuity of the unstable manifold implies that there is $\tilde{z}_k$ such that $\tilde{z}_{k}\to \tilde{z}$ and $\tilde{z}_k\in W^{uu}(y_k)$. Thus, $\phi^{t_1}(\tilde{z}_k)\in W^{cu}(y_k)$. From property (c), $W^{cu}(y_k)=W^{cu}(x)$, which allows us to conclude that $z\in \overline{W^{cu}(x)}$.

%The proof of second item is a consequence of d

%On the other hand, if $z\in B$ there exists $y\in A$ such that $z\in W^{cu}(y)$. As $y\in A$ by Lemma \ref{lem1}, (c), we have that $W^{cu}(y)\subset W^{cu}(x)=A$. Hence $A=B$. The subset $A\subset SM$ is called saturated in $W^{cu}$, and by property of foliations we have that $\overline{A}=\overline{W^{cu}(x)}$ is saturated, and this means $$\bigcup_{y\in\overline{A}}W^{cu}(y)=\overline{A}=\overline{W^{cu}(x)}.$$ 
\end{proof}
%\tcr{PAREI AQUI}

\begin{prop}\label{PROP}
Let $\phi:SM\to SM$ be an Anosov geodesic flow with $M$ connected and $\emph{vol}(M)<\infty$. Then for all $x\in SM$ we have $W^{cu}(x)$ and $W^{cs}(x)$ are dense in $SM$.
\end{prop}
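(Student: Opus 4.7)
The plan is to show that $A:=\overline{W^{cu}(x)}$ is both open and closed in $SM$, and then invoke connectedness of $SM$ (which follows from connectedness of $M$) to conclude $A=SM$. Closedness and nonemptiness are automatic, while Lemma \ref{lem2} together with property (b) already ensure that $A$ is $cu$-saturated and $\phi^t$-invariant. The entire content of the proposition therefore reduces to establishing that $A$ is open.

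To prove openness, fix an arbitrary $y\in A$ and apply Theorem \ref{teo1.7} at $y$ to obtain $\delta>0$ and $\eta>0$ such that the local bracket $W^{cu}_\eta(u)\cap W^{ss}_\eta(v)$ is a single point for every $u,v\in\overline{B(y,\delta)}$. For any periodic point $v\in B(y,\delta)$ of period $T>0$, let $w:=W^{cu}_\eta(y)\cap W^{ss}_\eta(v)$. Since $y\in A$, the $cu$-saturation from Lemma \ref{lem2} yields $w\in W^{cu}(y)\subset A$. On the other hand, $w\in W^{ss}(v)$ together with $\phi^{nT}(v)=v$ forces $\phi^{nT}(w)\to v$; as $A$ is closed and $\phi^t$-invariant, this gives $v\in A$. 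Hence every periodic point of $B(y,\delta)$ lies in $A$, and Theorem \ref{teo2.7} makes such points dense in $B(y,\delta)$, so closedness of $A$ upgrades this to $B(y,\delta)\subset A$. Therefore $A$ is open, and connectedness of $SM$ gives $A=SM$. The argument for $W^{cs}(x)$ is identical after interchanging the roles of $cu, ss$ with those of $cs, uu$.

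The main obstacle is precisely the passage from $cu$-saturation of $A$ to a full transverse neighborhood: $cu$-saturation alone only tells us $A$ contains a codimension-one leaf through each of its points, with no a priori thickness in the stable direction. The local product structure supplies that missing transverse direction by exchanging $cu$ for $ss$ at a single bracket point, and the density of periodic orbits provided by Theorem \ref{teo2.7} then lets recurrence of the target $v$ pull $v$ itself into $A$ via closedness and flow-invariance. The non-compactness of $SM$ enters only through the pointwise (rather than uniform) dependence of $\delta,\eta$ on the base point in Theorem \ref{teo1.7}, which is harmless since we reason pointwise at a fixed $y\in A$.
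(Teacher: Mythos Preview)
Your proof is correct and follows essentially the same route as the paper's: both argue by connectedness that $\overline{W^{cu}(x)}$ is open, use Lemma~\ref{lem2} for $cu$-saturation, pick a nearby periodic point via Theorem~\ref{teo2.7}, bracket it against the $cu$-leaf through the base point using the local product structure, and then let the forward orbit of the bracket point (which stays in the closed, flow-invariant set $A$) accumulate on the periodic orbit. The only differences are notational---you base the bracket at $y\in A$ and the paper at a point it calls $z$---and in presentation, since you state the $\phi^t$-invariance of $A$ explicitly up front rather than using it implicitly.
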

\begin{proof}
Consider $x\in SM$ and $W^{cu}(x)$ (the proof for $W^{cs}$ is analogous). We know that $\overline{W^{cu}(x)}$ is a closed subset of $SM$. Since $SM$ is a connected manifold, then we only need to prove that $\overline{W^{cu}(x)}$ is an open set. 
For this sake, let $z\in \overline{W^{cu}(x)}$, then from Lemma \ref{lem2}, there is  $y_{0}\in \overline{W^{cu}(x)}$ such that $z\in W^{cu}(y_{0})$. Therefore, by property (c)

$$W^{cu}(z)=W^{cu}(y_{0})\subset \bigcup_{v\in\overline{W^{cu}(x)}}W^{cu}(v)=\overline{W^{cu}(x)}.$$

Let $U\subset SM$ be  a neighborhood of $z$ and $p\in U\cap \text{Per}(\phi)$ (this last is possible because $\overline{\text{Per}(\phi)}=SM$). Taking $U$ sufficiently small such that the local structure product holds, then $ W^{ss}(p)\cap W^{cu}(z)\neq \emptyset$. Thus, let $y\in W^{ss}(p)\cap W^{cu}(z)$. Then, since $W^{cu}(z)\subset \overline{W^{cu}(x)}$, the property (b) implies that $O(y)\subset W^{cu}(z)\cap W^{cs}(p) \subset\overline{W^{cu}(x)}\cap W^{cs}(p)$.
Now, since $p$ is a periodic point and $y\in W^{ss}(p)$, then $O(y)$ accumulates in the orbit $O(p)$ and as $O(y)\subset \overline{W^{cu}(x)}$, we have $O(p)\subset \overline{W^{cu}(x)}$. Hence $p\in \overline{W^{cu}(x)}$ and as $\text{Per}(\phi)$ is dense in $U$ this implies $U\subset \overline{W^{cu}(x)}$, as we wish.\\
\end{proof}
As a consequence of this proposition, we have the following theorem.
\begin{teo}\label{T-Final}
Let $\phi:SM\to SM$ be an Anosov geodesic flow with $\emph{vol}(M)<\infty$, then $\phi$ is transitive.
\end{teo}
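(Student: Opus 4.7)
The plan is to prove the equivalent formulation of transitivity: for every pair of non-empty open sets $U, V \subset SM$, there exists $t \in \mathbb{R}$ with $\phi^t(U) \cap V \neq \emptyset$. Since $SM$ is a second countable Baire space, this set-wise condition yields a residual set of points with dense orbit, which is how I would interpret ``transitive'' in the statement.

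First, I would invoke Theorem \ref{teo2.7} to pick a periodic point $p \in V$, say of period $\tau > 0$. Next I would use Proposition \ref{PROP} to conclude that $W^{cu}(p)$ is dense in $SM$, so that $W^{cu}(p) \cap U \neq \emptyset$; fix some $y$ in this intersection.

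By definition of the center-unstable manifold, $y \in W^{uu}(\phi^{s_0}(p))$ for some $s_0 \in \mathbb{R}$, which means
\begin{equation*}
d(\phi^{-t}(y), \phi^{s_0 - t}(p)) \longrightarrow 0 \quad \text{as } t \to +\infty.
\end{equation*}
Here is where periodicity enters: evaluating along the sequence $t_k := s_0 + k\tau$ one has $\phi^{s_0 - t_k}(p) = \phi^{-k\tau}(p) = p$, so $\phi^{-t_k}(y) \to p$ as $k \to \infty$. Since $V$ is an open neighborhood of $p$, for $k$ large enough we get $\phi^{-t_k}(y) \in V$, while $y \in U$. Therefore $\phi^{-t_k}(U) \cap V \neq \emptyset$, establishing the set-wise transitivity condition.

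I do not expect any serious obstacle in this argument: the work has been done in Theorem \ref{teo2.7} (density of periodic orbits, which gives the required $p \in V$) and in Proposition \ref{PROP} (density of center-unstable leaves, which brings an orbit of $y$ close to $p$ through the $W^{uu}$ fiber). The only technical point worth spelling out is the passage from the set-wise condition to the existence of a dense orbit, which is a routine Baire category argument once one observes that $SM$ is second countable and locally compact Hausdorff, so that the countable intersection $\bigcap_n \bigcup_{t \in \mathbb{Q}} \phi^{-t}(V_n)$, for a countable basis $\{V_n\}$, is a dense $G_\delta$ of points with dense forward orbit.
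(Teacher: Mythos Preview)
Your argument is correct and, in fact, a bit cleaner than the paper's. Both proofs rely on Proposition~\ref{PROP} (density of center-stable/unstable leaves) to connect the two open sets, but the mechanisms differ. The paper picks an arbitrary $x\in V$ with nonempty $\alpha$-limit set, takes $y\in\alpha(x)$, uses density of $W^{cs}(y)$ to find a point of $U$ on some $W^{ss}(\phi^{t_1}(y))$, and then needs a continuity argument for the stable foliation---choosing disks $D_k\subset W^{ss}(\phi^{-n_k+t_1}(x))$ that still meet $U$---before contracting them forward into $V$. You instead invoke Theorem~\ref{teo2.7} to plant a \emph{periodic} point $p$ in $V$ from the start; periodicity lets you read off directly that $\phi^{-(s_0+k\tau)}(y)\to p$, so no auxiliary disk-continuity step is needed. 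The trade-off is that your proof uses both Theorem~\ref{teo2.7} and Proposition~\ref{PROP}, whereas the paper's proof of Theorem~\ref{T-Final} formally cites only Proposition~\ref{PROP}; but since Proposition~\ref{PROP} already depends on Theorem~\ref{teo2.7}, there is no loss. Your closing remark on the Baire passage from set-wise transitivity to a dense orbit is fine and standard; the paper simply stops at the set-wise condition.
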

\begin{proof}
Let $U$ and $V$ be two open sets of $SM$. Fix $x\in V$ such that its $\alpha$-limit, $\alpha(x)\neq\phi$. Take $y\in \alpha(x)$, then there is $n_{k}\to+\infty$ such that $\displaystyle\lim_{k\to+\infty}\phi^{-n_{k}}(x)=y$. From Proposition \ref{PROP}, we have  $\overline{W^{cs}(y)}=SM$, then there is $z\in W^{cs}(y)\cap U$. In particular, there is $t_{1}\in\mathbb{R}$ such that $z\in W^{ss}(\phi^{t_{1}}(y)\cap U$. By continuity of flow $\phi^{t_{1}}$ we obtain $$\lim_{k\to+\infty}\phi^{-n_{k}}(\phi^{t_{1}}(x))=\lim_{k\to+\infty}\phi^{t_{1}}(\phi^{-n_{k}}(x))=\phi^{t_{1}}(y).$$

Now, note that $W^{ss}(\phi^{t_{1}}(y))\cap U\neq \emptyset$ and $U$ is an open set, we consider a positive real number $b>0$ such that for all $k$ large enough  there is a disk $D_{k}\subset W^{ss}(\phi^{-n_{k}}(\phi^{t_{1}}(x)))$ centered at $\phi^{-n_{k}}(\phi^{t_{1}}(x))$ of radius at most $b$ such that $D_{k}\cap U\neq\phi$.

Since $\phi^{-t_{1}}$ is continuous and $x\in V$, we can fix a neighborhood $Q$ of $\phi^{t_{1}}(x)$ such that $\phi^{-t_{1}}(Q)\subset V$. Since $D_k$ has a radius at most $b$, then for $k$ large enough we have $\phi^{n_{k}}(D_{k})\subset Q$ and then $\phi^{-t_{1}}(\phi^{n_{k}}(D_{k}))\subset \phi^{-t_{1}}(Q)\subset V$. 

Finally, for $k$ large enough we have that 
$$\phi^{-t_{1}}(\phi^{n_{k}}(D_{k}\cap U))\subset \phi^{-t_{1}}(\phi^{n_{k}}(U))\cap \phi^{-t_{1}}(\phi^{n_{k}}(D_{k}))\subset \phi^{-t_{1}}(\phi^{n_{k}}(U))\cap V.$$
Thus, as $D_k\cap U\neq \emptyset$, then we obtain $\phi^{-t_{1}}(\phi^{n_{k}}(U))\cap V\neq \emptyset$ as we wish.
\end{proof}
%Also, for 
%Denote $A=W^{cu}(x)$ and consider $B=\cup_{v\in A} W^{cu}(v)$ as on Lemma \ref{lem2} we have $\overline{W^{cu}(x)}=\bigcup_{v\in\overline{W^{cu}(x)}}W^{cu}(v)$.
%As $z\in \overline{W^{cu}(x)}$ then there is $y_{0}\in \overline{W^{cu}(x)}$ such that $z\in W^{cu}(y_{0})$. By Lemma \ref{lem1}, (c) we obtain:
%$$W^{cu}(z)\subset W^{cu}(y_{0})\subset \bigcup_{v\in\overline{W^{cu}(x)}}W^{cu}(v)=\overline{W^{cu}(x)},$$

%thus $W^{cu}(z)\subset \overline{W^{cu}(x)}$. As $y\in W^{cu}(z)$ then by Lemma \ref{lem1}, (b), $O(y)\subset W^{cu}(z)\subset\overline{W^{cu}(x)}$.
%On the other hand, $y\in W^{cs}(p)$, then for all $t\in \mathbb{R}$, $\varphi^{t}(y)\in \varphi^{t}(W^{cs}(p))=W^{cs}(p)$, this implies $O(y)\subset W^{cs}(p)$.

%Now fixed $r\in \mathbb{R}$, we have $\varphi^{r}(y)\in W^{cs}(p)=\cup_{t\in\mathbb{R}} W^{ss}(\varphi^{t}(p))$, then there is $t_{r}\in\mathbb{R}$ such that $\varphi^{r}(y)\in W^{ss}(\varphi^{t_{r}}(p))$, this implies $\lim_{t\to+\infty}d(\varphi^{t}(\varphi^{r}(y)),\varphi^{t}(\varphi^{t_{r}}(p)))=0$, thus $O(y)$ accumulates in the orbit $O(p)$ and as $O(y)\subset \overline{W^{cu}(x)}$, we have $O(p)\subset \overline{W^{cu}(x)}$. Hence $p\in \overline{W^{cu}(x)}$ and as $\emph{Per}(\varphi)$ is dense on $SM$, then $\emph{Per}(\varphi)$ is dense in $U$, this implies $U\subset \overline{W^{cu}(x)}$.

%\begin{figure}[!h]
%\centering
%\includegraphics[width=14cm]{Shadowing1.jpg}
%\caption{Shadowing forward by piecewise.}
%\label{fig:Shadowing1}
%\end{figure}

\bibliographystyle{plain}
\pagestyle{empty}

\end{document}